\newtheorem{theorem}{Theorem}[section]
\newtheorem{lemma}[theorem]{Lemma}
\theoremstyle{definition}
\newtheorem{definition}[theorem]{Definition}
\theoremstyle{remark}
\newtheorem{remark}[theorem]{Remark}
\numberwithin{equation}{section}
\begin{document}

%
%
%
%
%
%
%
%
%

\title[On Mex-related partition functions of Andrews and Newman]
 {On Mex-related partition functions of Andrews and Newman}


\author{Rupam Barman}
\address{Department of Mathematics, Indian Institute of Technology Guwahati, Assam, India, PIN- 781039}
\email{rupam@iitg.ac.in}

\author{Ajit Singh}
\address{Department of Mathematics, Indian Institute of Technology Guwahati, Assam, India, PIN- 781039}
\email{ajit18@iitg.ac.in}

\date{August 30, 2020}

\subjclass{Primary 05A17, 11P83, 11F11}

\keywords{minimal excludant; mex function; partition; Eta-quotients; modular forms; arithmetic density}

\dedicatory{}

\begin{abstract}
The minimal excludant, or ``mex'' function, on a set $S$ of positive integers is the least positive integer not in $S$. In a recent paper, Andrews and Newman extended the mex-function to integer partitions and found numerous surprising partition identities connected with these functions. Very recently, da Silva and Sellers present parity considerations of one of the families of functions Andrews and Newman studied, namely $p_{t,t}(n)$, and provide complete parity characterizations of $p_{1,1}(n)$ and $p_{3,3}(n)$. In this article, we study the parity of $p_{t,t}(n)$ when $t=2^{\alpha}, 3\cdot 2^{\alpha}$ for all $\alpha\geq 1$. We prove that $p_{2^{\alpha},2^{\alpha}}(n)$ and $p_{3\cdot2^{\alpha}, 3\cdot2^{\alpha}}(n)$ are almost always even for all $\alpha\geq 1$. Using a result of Ono and Taguchi on nilpotency of Hecke operators, we also find infinite families of congruences modulo $2$ satisfied by $p_{2^{\alpha},2^{\alpha}}(n)$ and $p_{3\cdot2^{\alpha}, 3\cdot2^{\alpha}}(n)$ for all $\alpha\geq 1$. 
\end{abstract}

\maketitle
\section{Introduction and statement of results} 
For each set $S$ of positive integers the minimal excludant function (mex-function) is defined as follows: $$\text{mex}(S)=\text{min}(\mathbb{Z}_{>0}\setminus S).$$
Andrews and Newman \cite{Andrews-Newman} recently generalized this function to integer partitions. Given a partition $\lambda$ of $n$, they defined the mex-function $\text{mex}_{A, a}(\lambda)$ to be the smallest positive integer congruent to $a$ modulo $A$ that is not part of $\lambda$. They then defined $p_{A, a}(n)$ to be the number of partitions $\lambda$ of $n$ satisfying $$\text{mex}_{A, a}(\lambda)\equiv a \pmod{2A}.$$
For example, consider $n = 5$, $A = 2$, and $a = 2$. In the table below, we list the
seven partitions $\lambda$ of $5$ and the corresponding values of $\text{mex}_{2, 2}(\lambda)$ for each $\lambda$:\\
\begin{center}
\begin{tabular}{|c|c|}
	\hline
	Partition $\lambda$&$\text{mex}_{2, 2}(\lambda)$\\
	\hline 
	5&2\\
	$4+1$&2\\
	$3+2$&4\\
	$3+1+1$&2\\
	$2+2+1$&4\\
	$2+1+1+1$&4\\
	$1+1+1+1+1$&2\\
	\hline 
\end{tabular}
\end{center}
We see that four of the partitions of $5$ satisfy $\text{mex}_{2, 2}(\lambda)\equiv 2\pmod{4}$. Therefore, $p_{2, 2}(5)=4$. In \cite[Lemma 9]{Andrews-Newman}, Andrews and Newman proved that the generating function for $p_{t,t}(n)$ is given by
\begin{align}\label{gen-fun}
\sum_{n=0}^{\infty}p_{t,t}(n)q^n=\frac{1}{(q; q)_{\infty}}\sum_{n=0}^{\infty}(-1)^n q^{tn(n+1)/2},
\end{align}
where $\displaystyle (a; q)_{\infty}:= \prod_{j=0}^{\infty}(1-aq^j)$.
\par 
We note that $p_{1, 1}(n)$ and $p_{3,3}(n)$ are the sequences \cite[A064428]{integer-seq} and \cite[A260894]{integer-seq}, respectively. The arithmetic properties of the functions $p_{1, 1}(n)$ and $p_{3,3}(n)$ are studied in \cite{Andrews-Newman, sellers}. In order to state the results of Andrews and Newman on $p_{1, 1}(n)$ and $p_{3,3}(n)$, we now recall two partition statistics, the rank and the crank. The rank of a partition is the largest part minus the number of parts. The crank of a partition is the largest part of the partition if there are no ones as parts, and otherwise is the number of parts larger than the number of ones minus the numbers of ones. For more details on rank and crank, see for example \cite{dyson, garvan}. In \cite{Andrews-Newman}, Andrews and Newman proved that $p_{1, 1}(n)$ equals the number of partitions of $n$ with non-negative crank. They also proved that $p_{3,3}(n)$ equals the number of partitions of $n$ with rank $\geq -1$. Very recently, da Silva and Sellers \cite{sellers} provide complete parity characterizations of $p_{1, 1}(n)$ and $p_{3,3}(n)$. They prove that, for all $n\geq 1$,
\begin{align*}
p_{1, 1}(n)=\begin{cases}
1 \pmod{2}, & \mbox{if $n=k(3k\pm 1)$ for some $k$};\\
0\pmod{2}, & \mbox{otherwise}.\nonumber
\end{cases}
\end{align*}
Similarly, they prove that, for all $n\geq 1$,
\begin{align*}
p_{3, 3}(n)=\begin{cases}
1 \pmod{2}, & \mbox{if $3n+1$ is a square};\\
0\pmod{2}, & \mbox{otherwise}.\nonumber
\end{cases}
\end{align*}
\par In this article, we present parity results for $p_{2^{\alpha}, 2^{\alpha}}(n)$ and $p_{3\cdot2^{\alpha}, 3\cdot2^{\alpha}}(n)$ for all $\alpha\geq 1$. We find that the parities of $p_{2^{\alpha}, 2^{\alpha}}(n)$ and $p_{3\cdot2^{\alpha}, 3\cdot2^{\alpha}}(n)$ are different from that of $p_{1,1}(n)$ and $p_{3,3}(n)$. In the following two theorems, we prove that $p_{2^{\alpha}, 2^{\alpha}}(n)$ and $p_{3\cdot2^{\alpha}, 3\cdot2^{\alpha}}(n)$ are almost always even for all $\alpha\geq 1$. 
 \begin{theorem}\label{thm1}
 	For all $\alpha \geq 1$, the set $$\{n\in \mathbb{N}: p_{2^{\alpha}, 2^{\alpha}}(n)\equiv 0\pmod{2}\}$$ has arithmetic density $1$.
 \end{theorem}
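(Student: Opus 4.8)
The plan is to realize $\sum_n p_{2^{\alpha},2^{\alpha}}(n)q^n$ modulo $2$ as (the reduction of) a modular form, and then to invoke Serre's theorem on the density of vanishing coefficients of modular forms modulo a prime. Starting from \eqref{gen-fun} with $t=2^{\alpha}$, I would first observe that the theta-like sum $\sum_{n\geq 0}(-1)^n q^{2^{\alpha}n(n+1)/2}=\sum_{n\geq 0}(-1)^n q^{2^{\alpha-1}n(n+1)}$ can be handled modulo $2$: since $(-1)^n\equiv 1\pmod 2$, we get $\sum_n p_{2^{\alpha},2^{\alpha}}(n)q^n \equiv \frac{1}{(q;q)_\infty}\sum_{n\geq 0}q^{2^{\alpha-1}n(n+1)} \pmod 2$. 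The inner sum is, up to the standard manipulation, essentially $\psi(q^{2^{\alpha-1}})$ where $\psi(q)=\sum_{n\geq 0}q^{n(n+1)/2}=\frac{(q^2;q^2)_\infty^2}{(q;q)_\infty}$; alternatively, using $\sum_{n\in\mathbb Z}q^{n^2} = \frac{(q^2;q^2)_\infty^5}{(q;q)_\infty^2(q^4;q^4)_\infty^2}\equiv \frac{(q^2;q^2)_\infty}{1}\cdots$, one rewrites everything as an eta-quotient modulo $2$. The key identity to nail down is that $\frac{1}{(q;q)_\infty}\sum_{n\geq 0}q^{2^{\alpha-1}n(n+1)}$ equals an explicit eta-quotient times a power of $q$, modulo $2$.

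Next I would pass to eta-quotients proper. Writing $f_k:=(q^k;q^k)_\infty$, one has $\psi(q)=f_2^2/f_1$, so $\sum_n p_{2^{\alpha},2^{\alpha}}(n)q^n \equiv \frac{f_{2^{\alpha}}^2}{f_1 f_{2^{\alpha-1}}}\pmod 2$ after the substitution $q\mapsto q^{2^{\alpha-1}}$ inside $\psi$. Now I use the classical congruence $f_1^2\equiv f_2 \pmod 2$ (more generally $f_k^{2}\equiv f_{2k}\pmod 2$, and $f_k^{2^j}\equiv f_{2^j k}\pmod{2}$) to replace the denominator $f_1$ by an even power: multiply numerator and denominator by a suitable $f_1^{2^j-1}$ so that the denominator becomes $f_1^{2^j}\equiv f_{2^j}\pmod 2$ for large enough $j$. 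After clearing, I obtain $\sum_n p_{2^{\alpha},2^{\alpha}}(n)q^n\equiv q^{\delta}\,\prod_{\delta\mid M}f_\delta^{r_\delta}\pmod 2$ for explicit integers $r_\delta$, and then the standard machinery (Ligozat's criteria / Newman–Gordon–Hughes conditions, as packaged e.g.\ in Ono's book or in Radu's algorithm) shows $q^{\delta}\prod f_\delta^{r_\delta}$ is a holomorphic modular form of some integral weight $k$ on $\Gamma_0(N)$ with character, for suitable $N$ divisible by $2^{\alpha}$. Here one must check the order at every cusp is nonnegative; this is the computational heart and is routine but needs care in terms of $\alpha$.

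Finally, I would invoke Serre's density result: if $g=\sum a(n)q^n$ is a holomorphic modular form of positive integer weight on some $\Gamma_0(N)$ with character, then for any fixed modulus (here $2$), the set $\{n: a(n)\equiv 0 \pmod 2\}$ has arithmetic density $1$. Applying this to our $g\equiv \sum p_{2^{\alpha},2^{\alpha}}(n)q^{n+\delta}\pmod 2$ gives exactly that $p_{2^{\alpha},2^{\alpha}}(n)$ is even for a density-$1$ set of $n$, which is the theorem. The shift by $q^\delta$ and the possible need to restrict to an arithmetic progression of $n$'s (coming from the substitution $q\mapsto q^{2^{\alpha-1}}$) are cosmetic: density $1$ is preserved under removing a single residue class or translating, and in fact the excluded progression already has density $<1$, so it only helps.

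The main obstacle I anticipate is Step two's bookkeeping: verifying that after the $f_1^2\equiv f_2$ trick the resulting eta-quotient is genuinely \emph{holomorphic} (nonnegative order at all cusps of $\Gamma_0(N)$), uniformly in $\alpha$, and identifying the correct level $N$ and weight. The substitution $q\mapsto q^{2^{\alpha-1}}$ inflates the level and one has to be sure the cusp conditions still hold; getting a clean statement for all $\alpha\geq 1$ rather than small cases is where the real work lies. Everything else — the mod-$2$ reduction of the theta series, the eta-quotient identities, and the appeal to Serre — is standard.
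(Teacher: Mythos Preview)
Your proposal is correct and follows essentially the same route as the paper: reduce the generating function modulo $2$ via $\psi$, rewrite as an eta-quotient, promote it (by multiplying by something $\equiv 1\pmod 2$) to a genuine holomorphic form on some $\Gamma_0(N)$, and apply Serre's density theorem. Two small remarks: your indexing is off by one (the inner sum is $\psi(q^{2^{\alpha}})$, giving $\frac{f_{2^{\alpha+1}}^{2}}{f_1 f_{2^{\alpha}}}\equiv \frac{f_{2^{\alpha}}^{3}}{f_1}\pmod 2$, not $\frac{f_{2^{\alpha}}^{2}}{f_1 f_{2^{\alpha-1}}}$), and for the holomorphy step the paper's specific device is to pass to $\eta^{3}(3\cdot 2^{\alpha+3}z)/\eta(24z)$ and multiply by $\bigl(\eta^{2}(3\cdot 2^{\alpha+3}z)/\eta(3\cdot 2^{\alpha+4}z)\bigr)^{2^{\alpha}}\equiv 1\pmod 2$, which makes the cusp check go through uniformly in $\alpha$ on $\Gamma_0(9\cdot 2^{\alpha+6})$---exactly the obstacle you anticipated.
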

\begin{theorem}\label{thm3}
	For all $\alpha \geq 1$, the set $$\{n\in \mathbb{N}: p_{3\cdot2^{\alpha}, 3\cdot2^{\alpha}}(n)\equiv 0\pmod{2}\}$$ has arithmetic density $1$.
\end{theorem}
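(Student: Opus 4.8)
The plan is to work modulo $2$ throughout, to identify a dilation of the generating function in \eqref{gen-fun} with the $q$-expansion of an explicit eta-quotient that is a holomorphic modular form of positive integer weight on some $\Gamma_0(N)$, and then to invoke Serre's theorem on the almost-everywhere divisibility of the Fourier coefficients of a modular form. Write $t=3\cdot 2^{\alpha}$.

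First I would reduce \eqref{gen-fun} modulo $2$. Since $(-1)^n\equiv 1\pmod 2$ and $\psi(q):=\sum_{n\ge 0}q^{n(n+1)/2}=(q^2;q^2)_\infty^2/(q;q)_\infty$ by Gauss, \eqref{gen-fun} gives
\[
\sum_{n\ge 0}p_{t,t}(n)q^n\equiv\frac{\psi(q^{t})}{(q;q)_\infty}=\frac{(q^{2t};q^{2t})_\infty^2}{(q^{t};q^{t})_\infty(q;q)_\infty}\pmod 2 .
\]
The right-hand side has ``weight $0$'' as an eta-quotient, so next I would multiply it by a power series of the form $(q;q)_\infty^{2^{m+1}}/(q^{2^{m}};q^{2^{m}})_\infty^{2}$ for a suitable $m\ge 1$; since $(q^{\ell};q^{\ell})_\infty^{2}\equiv(q^{2\ell};q^{2\ell})_\infty\pmod 2$, such a factor is $\equiv 1\pmod 2$ and it raises the weight to $2^{m}-1$. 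After replacing $q$ by $q^{24}$, so that all relevant exponents become divisible by $24$, a routine bookkeeping produces integers $N$, $\nu\ge 0$ and $(r_\delta)_{\delta\mid N}$ with
\[
q^{\nu}\sum_{n\ge 0}p_{t,t}(n)q^{24n}\equiv G_{\alpha}(z):=\prod_{\delta\mid N}\eta(\delta z)^{r_\delta}\pmod 2 ,
\]
where $G_{\alpha}$ has positive integer weight $k=\tfrac12\sum_{\delta\mid N}r_\delta$.

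I would then verify, using the standard (Ligozat) criteria, that $\sum_{\delta\mid N}\delta r_\delta\equiv\sum_{\delta\mid N}(N/\delta)r_\delta\equiv 0\pmod{24}$ and that the order of vanishing of $G_{\alpha}$ at every cusp $c/d$ of $\Gamma_0(N)$, given by $\tfrac{N}{24}\sum_{\delta\mid N}\tfrac{\gcd(d,\delta)^2 r_\delta}{\gcd(d,N/d)\,d\,\delta}$, is nonnegative. It follows that $G_{\alpha}$ is a holomorphic modular form in $M_k(\Gamma_0(N),\chi)$ for the Nebentypus $\chi$ attached to the eta-quotient, and it has integer Fourier coefficients. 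By Serre's theorem, such a form has almost all Fourier coefficients even, so the set of exponents at which the coefficient of $G_{\alpha}$ is odd has density $0$. Since, by the previous display, the coefficient of $q^{24n+\nu}$ in $G_{\alpha}$ equals $p_{t,t}(n)\bmod 2$, the set $\{n:p_{3\cdot 2^{\alpha},3\cdot 2^{\alpha}}(n)\equiv 1\pmod 2\}$ has density $0$, which is the assertion. The proof of Theorem \ref{thm1} is entirely analogous, with $t=2^{\alpha}$ and the corresponding eta-quotient.

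The step I expect to be the main obstacle is the cusp computation: one must choose the auxiliary factor — hence $m$, $N$, $\nu$ and the exponents $r_\delta$ — so that the order of $G_{\alpha}$ is nonnegative at \emph{every} cusp of $\Gamma_0(N)$, and this has to be arranged uniformly in $\alpha$. This is an elementary but somewhat delicate family of divisor estimates; once it is settled, the passage through Serre's theorem is immediate.
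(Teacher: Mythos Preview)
Your proposal is correct and follows essentially the same route as the paper: reduce \eqref{gen-fun} modulo $2$ via $\psi(q)=(q^2;q^2)_\infty^2/(q;q)_\infty$, multiply by an eta-quotient that is $\equiv 1\pmod 2$ to force positive integer weight, dilate by $q\mapsto q^{24}$, verify the Ligozat conditions and cusp orders, and apply Serre's theorem. The only cosmetic difference is the choice of the weight-raising factor: the paper first simplifies $(q^{2t};q^{2t})_\infty^2/(q^t;q^t)_\infty\equiv(q^t;q^t)_\infty^3\pmod 2$ and then multiplies by $\bigl(\eta^{2}(9\cdot 2^{\alpha+3}z)/\eta(9\cdot 2^{\alpha+4}z)\bigr)^{2^{\alpha+1}}$ rather than your $(q;q)_\infty^{2^{m+1}}/(q^{2^m};q^{2^m})_\infty^{2}$, which makes the explicit cusp table (your anticipated ``main obstacle'') come out cleanly for level $9\cdot 2^{\alpha+6}$ and weight $2^{\alpha}+1$.
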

 Serre observed and Tate proved \cite{serre1, serre2, tate} that the action of Hecke algebras on spaces of modular forms of level 1 modulo 2 is locally nilpotent. Ono and Taguchi \cite{ono2005} showed that this phenomenon generalizes to higher levels. In this article, we find that the eta-quotients associated to the generating functions of $p_{2^{\alpha}, 2^{\alpha}}(n)$ and $p_{3\cdot2^{\alpha}, 3\cdot2^{\alpha}}(n)$ are modular forms whose levels land in Ono and Taguchi’s list. This allows us to use a result of Ono and Taguchi to prove the following congruences for $p_{2^{\alpha}, 2^{\alpha}}(n)$ and $p_{3\cdot 2^{\alpha}, 3\cdot 2^{\alpha}}(n)$ for all $\alpha\geq 1$.
 \begin{theorem}\label{thm2}
 	Let $\alpha$ be a positive integer. Then there is an integer $c_1 \geq 0$ such that for every $d_1 \geq 1$ and distinct primes $p_1, \ldots, p_{c_1+d_1}$ coprime to $6$, we have 
 	\begin{align*}
 	p_{2^{\alpha}, 2^{\alpha}}\left(\frac{p_1\cdots p_{c_1+d_1}\cdot n+1-3\cdot 2^{\alpha}}{24} \right)\equiv 0\pmod{2}
 	\end{align*}
 	whenever $n$ is coprime to $p_1, \ldots, p_{c_1+d_1}$.
 \end{theorem}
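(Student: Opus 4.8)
I would prove Theorem~\ref{thm2} by realizing the generating function of $p_{2^{\alpha},2^{\alpha}}(n)$ modulo $2$ as a shift of the $q$-expansion of a holomorphic modular form covered by Ono and Taguchi's nilpotency theorem, and then extracting the congruences from the vanishing of suitable Hecke translates. First reduce \eqref{gen-fun} modulo $2$: since $(-1)^{n}\equiv 1\pmod 2$, and the classical Gauss identity $\sum_{n\ge 0}q^{n(n+1)/2}=(q^{2};q^{2})_{\infty}^{2}/(q;q)_{\infty}$ applied with $q$ replaced by $q^{2^{\alpha}}$ gives $\sum_{n\ge 0}q^{2^{\alpha}n(n+1)/2}=f_{2^{\alpha+1}}^{2}/f_{2^{\alpha}}$ where $f_{k}:=(q^{k};q^{k})_{\infty}$, equation \eqref{gen-fun} yields
\begin{align*}
\sum_{n\ge 0}p_{2^{\alpha},2^{\alpha}}(n)q^{n}\equiv \frac{1}{f_{1}}\sum_{n\ge 0}q^{2^{\alpha}n(n+1)/2}=\frac{f_{2^{\alpha+1}}^{2}}{f_{2^{\alpha}}\,f_{1}}\equiv f_{1}^{\,3\cdot 2^{\alpha}-1}\pmod 2,
\end{align*}
the last step using $f_{2^{k}}\equiv f_{1}^{2^{k}}\pmod 2$. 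As $3\cdot 2^{\alpha}-1$ is odd, I would instead use an equivalent eta-quotient of \emph{even} total $\eta$-exponent, hence integral weight, obtained by trading factors back via $f_{a}^{2}\equiv f_{2a}$ — for instance $f_{1}f_{4}$ when $\alpha=1$ and $f_{1}f_{2}^{\,3\cdot 2^{\alpha-1}-1}$ when $\alpha\ge 2$.

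Next, pass to a genuine modular form by the substitution $q\mapsto q^{24}$: let $G_{\alpha}(z)$ be the corresponding eta-quotient in $\eta(24z),\eta(48z),\dots$ (if one prefers the cleaner single-factor form $\eta(24z)^{3\cdot 2^{\alpha}-1}$, of half-integral weight, multiply it by $\theta(24z)=\sum_{m\in\mathbb{Z}}q^{24m^{2}}\equiv 1\pmod 2$, a weight-$\tfrac12$ form on $\Gamma_0(96)$, to make the weight integral). Then, with $q=e^{2\pi i z}$,
\begin{align*}
G_{\alpha}(z)\equiv \sum_{n\ge 0}p_{2^{\alpha},2^{\alpha}}(n)\,q^{\,24n+3\cdot 2^{\alpha}-1}\pmod 2 .
\end{align*}
The key step — and what I expect to be the main obstacle — is to verify, via the standard criteria for eta-quotients (Ligozat), that $G_{\alpha}$ is a holomorphic modular form of some positive integral weight $k_{\alpha}$ on $\Gamma_0(N_{\alpha})$ with a quadratic nebentypus, where $N_{\alpha}$ is supported on the primes $2$ and $3$ and lies in Ono and Taguchi's list of levels; this requires the congruences $\sum_{\delta}\delta r_{\delta}\equiv\sum_{\delta}(N_{\alpha}/\delta)r_{\delta}\equiv 0\pmod{24}$, a cusp-by-cusp check that $G_{\alpha}$ has non-negative order at every cusp of $\Gamma_0(N_{\alpha})$, and an explicit identification of $N_{\alpha}$ inside that list.

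Granting this, the conclusion is essentially formal. By Ono and Taguchi's theorem, the Hecke operators $T_{p}$ with $p\nmid 2N_{\alpha}$ act locally nilpotently on $M_{k_{\alpha}}(\Gamma_0(N_{\alpha}))$ modulo $2$; since this space is finite-dimensional over $\overline{\mathbb{F}}_{2}$ and these operators commute, there is an integer $c_{1}=c_{1}(\alpha)\ge 0$ such that for every $d_{1}\ge 1$ and all distinct primes $p_{1},\dots,p_{c_{1}+d_{1}}$ coprime to $2N_{\alpha}$ — hence coprime to $6$ —
\begin{align*}
G_{\alpha}\mid T_{p_{1}}\cdots T_{p_{c_{1}+d_{1}}}\equiv 0\pmod 2 .
\end{align*}
Writing $G_{\alpha}=\sum_{m}a(m)q^{m}$, we have $a(m)\equiv p_{2^{\alpha},2^{\alpha}}\!\big((m+1-3\cdot 2^{\alpha})/24\big)\pmod 2$ when $m\equiv 3\cdot 2^{\alpha}-1\pmod{24}$ and $a(m)=0$ otherwise. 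In weight $k_{\alpha}$, the coefficient of $q^{m}$ in $G_{\alpha}\mid T_{p_{1}}\cdots T_{p_{r}}$ equals $a(p_{1}\cdots p_{r}m)$ whenever $\gcd(m,p_{1}\cdots p_{r})=1$ (the terms $a(m/p_{i})$ drop out, and the Euler factors $\chi(p_{i})p_{i}^{\,k_{\alpha}-1}$ are odd), so the displayed vanishing forces $a(p_{1}\cdots p_{c_{1}+d_{1}}\,n)\equiv 0\pmod 2$ for all $n$ coprime to $p_{1}\cdots p_{c_{1}+d_{1}}$. Unwinding the description of $a$ gives
\begin{align*}
p_{2^{\alpha},2^{\alpha}}\!\left(\frac{p_{1}\cdots p_{c_{1}+d_{1}}\cdot n+1-3\cdot 2^{\alpha}}{24}\right)\equiv 0\pmod 2
\end{align*}
whenever $n$ is coprime to $p_{1},\dots,p_{c_{1}+d_{1}}$ (the assertion being vacuous unless the argument is a non-negative integer), which is exactly Theorem~\ref{thm2}.
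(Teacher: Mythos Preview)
Your proposal is correct and follows essentially the same strategy as the paper: reduce \eqref{gen-fun} modulo $2$ to an eta-quotient, rescale by $q\mapsto q^{24}$, verify the result is a holomorphic integral-weight form on $\Gamma_0(9\cdot 2^{a})$, and apply Ono--Taguchi nilpotency to extract the congruence. The only real difference is the choice of eta-quotient: the paper uses $H_{\alpha}(z)=\eta(3\cdot 2^{\alpha+3}z)^{3+2^{\alpha+1}}\big/\bigl(\eta(24z)\,\eta(3\cdot 2^{\alpha+4}z)^{2^{\alpha}}\bigr)$, which has negative exponents and therefore requires the explicit cusp-by-cusp check carried out in Lemma~\ref{lma1}, whereas your choices $f_{1}f_{4}$ (for $\alpha=1$) and $f_{1}f_{2}^{\,3\cdot 2^{\alpha-1}-1}$ (for $\alpha\ge 2$) have only positive exponents, so holomorphicity at every cusp is automatic --- the ``main obstacle'' you flag is in fact trivial for your construction, and the level still lands in $\{9\cdot 2^{a}\}$. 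One minor remark: in your final paragraph the comment that the Euler factors $\chi(p_{i})p_{i}^{\,k_{\alpha}-1}$ are odd is unnecessary, since the terms $a(m/p_{i})$ already vanish when $\gcd(m,p_{i})=1$.
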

\begin{theorem}\label{thm4}
	Let $\alpha$ be a positive integer. Then there is an integer $c_2 \geq 0$ such that for every $d_2 \geq 1$ and distinct primes $p_1, \ldots, p_{c_2+d_2}$ coprime to $6$, we have 
	\begin{align*}
	p_{3\cdot2^{\alpha}, 3\cdot2^{\alpha}}\left(\frac{p_1\cdots p_{c_2+d_2}\cdot n+1-9\cdot 2^{\alpha}}{24} \right)\equiv 0\pmod{2}
	\end{align*}
	whenever $n$ is coprime to $p_1, \ldots, p_{c_2+d_2}$.
\end{theorem}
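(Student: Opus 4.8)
The plan is to attach to each $\alpha\ge 1$ an eta-quotient that is a cusp form of positive integer weight on a congruence subgroup $\Gamma_0(N_\alpha)$ whose level $N_\alpha$ lies in Ono and Taguchi's list (so $N_\alpha$ is divisible only by $2$ and $3$), and whose $q$-expansion is congruent modulo $2$ to the part of the generating function of $p_{3\cdot2^\alpha,3\cdot2^\alpha}$ supported on a single residue class mod $24$; the stated congruences then drop out of Ono and Taguchi's nilpotency theorem for Hecke operators modulo $2$.

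Write $t=3\cdot 2^\alpha$. First I would reduce \eqref{gen-fun} modulo $2$. Since $(-1)^n\equiv 1\pmod 2$, and since Jacobi's identity $(q;q)_\infty^3=\sum_{n\ge0}(-1)^n(2n+1)q^{n(n+1)/2}$ gives $\sum_{n\ge0}q^{n(n+1)/2}\equiv(q;q)_\infty^3\pmod 2$ (hence $\sum_{n\ge0}q^{tn(n+1)/2}\equiv(q^t;q^t)_\infty^3\pmod2$ on replacing $q$ by $q^t$), we get
\[
\sum_{n\ge0}p_{t,t}(n)q^n\equiv\frac{(q^t;q^t)_\infty^3}{(q;q)_\infty}\pmod2 .
\]
Because $p_{t,t}(n)$ grows faster than any polynomial, the right side cannot itself be congruent to an integer-weight holomorphic form, so I would next dissect: replace $q$ by $q^{24}$ and multiply by $q^{3t-1}=q^{9\cdot2^\alpha-1}$, turning the left side into $\sum_{m\ge0}p_{t,t}(m)q^{24m+9\cdot2^\alpha-1}$. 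On the right I would eliminate the factor $(q;q)_\infty^{-1}$ modulo $2$ using $(q^d;q^d)_\infty^{2^j}\equiv(q^{2^jd};q^{2^jd})_\infty\pmod2$, i.e. replace $(q;q)_\infty^{-1}$ by $(q;q)_\infty^{2^k-1}(q^{2^k};q^{2^k})_\infty^{-1}$ for a suitable $k$, and then multiply in auxiliary factors $(q^{a};q^a)_\infty^{2}(q^{2a};q^{2a})_\infty^{-1}$ (each $\equiv1\pmod2$, and with $a$ a power of $2$ times a power of $3$) to make the total eta-exponent even and the quotient holomorphic. The outcome is an eta-quotient $g_\alpha(\tau)$ of positive integer weight $k_\alpha$ with
\[
g_\alpha(\tau)\equiv\sum_{m\ge0}p_{3\cdot2^\alpha,3\cdot2^\alpha}(m)\,q^{24m+9\cdot2^\alpha-1}\pmod2 .
\]

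The genuinely delicate step is then to verify, via Ligozat's criteria for eta-quotients, that $k$ and the $a$'s can be chosen so that $g_\alpha$ is simultaneously (i) of integer weight $\ge 2$, (ii) holomorphic with positive order at every cusp of $\Gamma_0(N_\alpha)$, i.e. a cusp form with a real Nebentypus, and (iii) of a level $N_\alpha$ appearing in Ono and Taguchi's list, uniformly in $\alpha$. This cusp-by-cusp bookkeeping is where the real work lies; once $g_\alpha$ is produced, the rest is formal.

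Finally I would apply Ono and Taguchi \cite{ono2005}: there is an integer $c_2=c_2(\alpha)\ge 0$ such that for every $d_2\ge 1$ and all distinct primes $p_1,\dots,p_{c_2+d_2}$ coprime to $2N_\alpha$ --- hence to $6$, since $N_\alpha$ is divisible only by $2$ and $3$ --- one has $g_\alpha\mid T_{p_1}\mid T_{p_2}\mid\cdots\mid T_{p_{c_2+d_2}}\equiv0\pmod2$. To turn this into a statement about $p_{t,t}$, write $g_\alpha=\sum_n b(n)q^n$, so that $b(24m+9\cdot2^\alpha-1)\equiv p_{3\cdot2^\alpha,3\cdot2^\alpha}(m)\pmod2$ while $b(n)$ is even whenever $n\not\equiv 9\cdot2^\alpha-1\pmod{24}$. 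For an odd prime $p\nmid N_\alpha$ the operator $T_p$ sends $\sum b(n)q^n$ to $\sum\big(b(pn)+\chi(p)p^{k_\alpha-1}b(n/p)\big)q^n$, which modulo $2$ is $\sum\big(b(pn)+b(n/p)\big)q^n$ since $\chi$ is real and $p$ is odd; an induction on the number of primes then shows that, for $n$ coprime to $p_1\cdots p_{c_2+d_2}$, the $q^n$-coefficient of $g_\alpha\mid T_{p_1}\mid\cdots\mid T_{p_{c_2+d_2}}$ is congruent to $b(p_1\cdots p_{c_2+d_2}\,n)$ modulo $2$. Since that form is $\equiv0\pmod2$, we conclude $b(p_1\cdots p_{c_2+d_2}\,n)\equiv0\pmod2$ for all such $n$, which is precisely
\[
p_{3\cdot2^\alpha,3\cdot2^\alpha}\!\left(\frac{p_1\cdots p_{c_2+d_2}\,n+1-9\cdot2^\alpha}{24}\right)\equiv0\pmod2
\]
whenever the argument is a nonnegative integer (and the congruence is vacuous otherwise). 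The only real obstacle, as stressed above, is realizing $g_\alpha$ as a holomorphic cusp form of integer weight whose level sits in Ono and Taguchi's list.
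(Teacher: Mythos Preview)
Your plan is essentially the paper's own approach: reduce the generating function modulo $2$ to $(q^t;q^t)_\infty^3/(q;q)_\infty$, pass to the variable $q^{24}$, multiply by auxiliary factors $\equiv 1\pmod 2$ to obtain a genuine holomorphic modular form on some $\Gamma_0(9\cdot 2^m)$, and then invoke Ono--Taguchi. The paper makes this concrete by taking
\[
S_\alpha(z)=\frac{\eta^{3+2^{\alpha+2}}(9\cdot 2^{\alpha+3}z)}{\eta(24z)\,\eta^{2^{\alpha+1}}(9\cdot 2^{\alpha+4}z)}\in M_{2^\alpha+1}\bigl(\Gamma_0(9\cdot 2^{\alpha+6}),\chi_2\bigr),
\]
obtained by multiplying $\eta^3(9\cdot 2^{\alpha+3}z)/\eta(24z)$ by $\bigl(\eta^2(9\cdot 2^{\alpha+3}z)/\eta(9\cdot 2^{\alpha+4}z)\bigr)^{2^{\alpha+1}}\equiv 1\pmod{2}$ --- precisely your ``auxiliary factor'' trick with a single choice of~$a$. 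So the step you flag as ``the real obstacle'' is in fact carried out explicitly in the paper (Lemma~\ref{lem2}), and the cusp-by-cusp verification reduces to a short table; there is no need to first rewrite $(q;q)_\infty^{-1}$ as $(q;q)_\infty^{2^k-1}(q^{2^k};q^{2^k})_\infty^{-1}$.

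Two small corrections to your outline. First, you do not need $g_\alpha$ to be a \emph{cusp} form: the paper's $S_\alpha$ is only shown to be holomorphic at the cusps, and Ono--Taguchi's theorem, though stated for cusp forms, extends to all of $M_k$ once one checks the Eisenstein subspace directly (the paper records this in a remark after Theorem~\ref{ono}). Insisting on cuspidality would make your construction harder for no gain. Second, your sentence ``$p_{t,t}(n)$ grows faster than any polynomial, so the right side cannot itself be congruent to an integer-weight holomorphic form'' is not the correct diagnosis --- congruences modulo $2$ are insensitive to growth. The actual obstruction is that $\eta^3(tz)/\eta(z)$ has negative order at some cusps; the rescaling $q\mapsto q^{24}$ together with the auxiliary factor is what repairs this.
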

\section{Preliminaries}
We recall some definitions and basic facts on modular forms. For more details, see for example \cite{ono2004, koblitz1993}. We first define the matrix groups 
\begin{align*}
\text{SL}_2(\mathbb{Z}) & :=\left\{\begin{bmatrix}
a  &  b \\
c  &  d      
\end{bmatrix}: a, b, c, d \in \mathbb{Z}, ad-bc=1
\right\},\\
\Gamma_{0}(N) & :=\left\{
\begin{bmatrix}
a  &  b \\
c  &  d      
\end{bmatrix} \in \text{SL}_2(\mathbb{Z}) : c\equiv 0\pmod N \right\},
\end{align*}
\begin{align*}
\Gamma_{1}(N) & :=\left\{
\begin{bmatrix}
a  &  b \\
c  &  d      
\end{bmatrix} \in \Gamma_0(N) : a\equiv d\equiv 1\pmod N \right\},
\end{align*}
and 
\begin{align*}\Gamma(N) & :=\left\{
\begin{bmatrix}
a  &  b \\
c  &  d      
\end{bmatrix} \in \text{SL}_2(\mathbb{Z}) : a\equiv d\equiv 1\pmod N, ~\text{and}~ b\equiv c\equiv 0\pmod N\right\},
\end{align*}
where $N$ is a positive integer. A subgroup $\Gamma$ of $\text{SL}_2(\mathbb{Z})$ is called a congruence subgroup if $\Gamma(N)\subseteq \Gamma$ for some $N$. The smallest $N$ such that $\Gamma(N)\subseteq \Gamma$
is called the level of $\Gamma$. For example, $\Gamma_0(N)$ and $\Gamma_1(N)$
are congruence subgroups of level $N$. 
\par Let $\mathbb{H}:=\{z\in \mathbb{C}: \text{Im}(z)>0\}$ be the upper half of the complex plane. The group $$\text{GL}_2^{+}(\mathbb{R})=\left\{\begin{bmatrix}
a  &  b \\
c  &  d      
\end{bmatrix}: a, b, c, d\in \mathbb{R}~\text{and}~ad-bc>0\right\}$$ acts on $\mathbb{H}$ by $\begin{bmatrix}
a  &  b \\
c  &  d      
\end{bmatrix} z=\displaystyle \frac{az+b}{cz+d}$.  
We identify $\infty$ with $\displaystyle\frac{1}{0}$ and define $\begin{bmatrix}
a  &  b \\
c  &  d      
\end{bmatrix} \displaystyle\frac{r}{s}=\displaystyle \frac{ar+bs}{cr+ds}$, where $\displaystyle\frac{r}{s}\in \mathbb{Q}\cup\{\infty\}$.
This gives an action of $\text{GL}_2^{+}(\mathbb{R})$ on the extended upper half-plane $\mathbb{H}^{\ast}=\mathbb{H}\cup\mathbb{Q}\cup\{\infty\}$. 
Suppose that $\Gamma$ is a congruence subgroup of $\text{SL}_2(\mathbb{Z})$. A cusp of $\Gamma$ is an equivalence class in $\mathbb{P}^1=\mathbb{Q}\cup\{\infty\}$ under the action of $\Gamma$.
\par The group $\text{GL}_2^{+}(\mathbb{R})$ also acts on functions $f: \mathbb{H}\rightarrow \mathbb{C}$. In particular, suppose that $\gamma=\begin{bmatrix}
a  &  b \\
c  &  d      
\end{bmatrix}\in \text{GL}_2^{+}(\mathbb{R})$. If $f(z)$ is a meromorphic function on $\mathbb{H}$ and $\ell$ is an integer, then define the slash operator $|_{\ell}$ by 
$$(f|_{\ell}\gamma)(z):=(\text{det}~{\gamma})^{\ell/2}(cz+d)^{-\ell}f(\gamma z).$$
\begin{definition}
	Let $\Gamma$ be a congruence subgroup of level $N$. A holomorphic function $f: \mathbb{H}\rightarrow \mathbb{C}$ is called a modular form with integer weight $\ell$ on $\Gamma$ if the following hold:
	\begin{enumerate}
		\item We have $$f\left(\displaystyle \frac{az+b}{cz+d}\right)=(cz+d)^{\ell}f(z)$$ for all $z\in \mathbb{H}$ and all $\begin{bmatrix}
		a  &  b \\
		c  &  d      
		\end{bmatrix} \in \Gamma$.
		\item If $\gamma\in \text{SL}_2(\mathbb{Z})$, then $(f|_{\ell}\gamma)(z)$ has a Fourier expansion of the form $$(f|_{\ell}\gamma)(z)=\displaystyle\sum_{n\geq 0}a_{\gamma}(n)q_N^n,$$
		where $q_N:=e^{2\pi iz/N}$.
	\end{enumerate}
\end{definition}
For a positive integer $\ell$, the complex vector space of modular forms of weight $\ell$ with respect to a congruence subgroup $\Gamma$ is denoted by $M_{\ell}(\Gamma)$.
\begin{definition}\cite[Definition 1.15]{ono2004}
	If $\chi$ is a Dirichlet character modulo $N$, then we say that a modular form $f\in M_{\ell}(\Gamma_1(N))$ has Nebentypus character $\chi$ if
	$$f\left( \frac{az+b}{cz+d}\right)=\chi(d)(cz+d)^{\ell}f(z)$$ for all $z\in \mathbb{H}$ and all $\begin{bmatrix}
	a  &  b \\
	c  &  d      
	\end{bmatrix} \in \Gamma_0(N)$. The space of such modular forms is denoted by $M_{\ell}(\Gamma_0(N), \chi)$. 
\end{definition}
In this paper, the relevant modular forms are those that arise from eta-quotients. Recall that the Dedekind's eta-function $\eta(z)$ is defined by
\begin{align*}
\eta(z):=q^{1/24}(q;q)_{\infty}=q^{1/24}\prod_{n=1}^{\infty}(1-q^n),
\end{align*}
where $q:=e^{2\pi iz}$ and $z\in \mathbb{H}$. A function $f(z)$ is called an eta-quotient if it is of the form
\begin{align*}
f(z)=\prod_{\delta\mid N}\eta(\delta z)^{r_\delta},
\end{align*}
where $N$ is a positive integer and $r_{\delta}$ is an integer. We now recall two theorems from \cite[p. 18]{ono2004} which will be used to prove our results.
\begin{theorem}\cite[Theorem 1.64]{ono2004}\label{thm_ono1} If $f(z)=\prod_{\delta\mid N}\eta(\delta z)^{r_\delta}$ 
is an eta-quotient such that $\ell=\frac{1}{2}\sum_{\delta\mid N}r_{\delta}\in \mathbb{Z}$, 
	$$\sum_{\delta\mid N} \delta r_{\delta}\equiv 0 \pmod{24}$$ and
	$$\sum_{\delta\mid N} \frac{N}{\delta}r_{\delta}\equiv 0 \pmod{24},$$
	then $f(z)$ satisfies $$f\left( \frac{az+b}{cz+d}\right)=\chi(d)(cz+d)^{\ell}f(z)$$
	for every  $\begin{bmatrix}
	a  &  b \\
	c  &  d      
	\end{bmatrix} \in \Gamma_0(N)$. Here the character $\chi$ is defined by $\chi(d):=\left(\frac{(-1)^{\ell} s}{d}\right)$, where $s:= \prod_{\delta\mid N}\delta^{r_{\delta}}$. 
\end{theorem}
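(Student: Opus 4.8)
The plan is to obtain the functional equation from the classical transformation behaviour of the Dedekind eta-function under the full modular group, and then to pin down the resulting root of unity using the two congruence hypotheses. The only external input I would invoke is the transformation formula: for $\gamma=\begin{bmatrix}a&b\\c&d\end{bmatrix}\in\mathrm{SL}_2(\mathbb{Z})$ with $c>0$,
$$\eta(\gamma z)=\varepsilon(\gamma)\,(cz+d)^{1/2}\,\eta(z),$$
where $(cz+d)^{1/2}$ is the principal branch and $\varepsilon(\gamma)$ is an explicit $24$-th root of unity, expressible through Dedekind sums (in one normalisation, $\varepsilon(\gamma)=\exp\big(\pi i(\tfrac{a+d}{12c}+s(-d,c))\big)$) and, when $c$ is odd, in the Jacobi-symbol shape $\varepsilon(\gamma)=\left(\tfrac{d}{c}\right)\cdot\zeta$, with $\zeta$ a root of unity depending only on $a,b,c,d$ modulo small integers; an analogous description holds for even $c$. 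Since $\eta$ is holomorphic and nowhere vanishing on $\mathbb{H}$, so is each $\eta(\delta z)^{r_\delta}$ and hence $f$; thus only the stated functional equation requires proof.

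Fix $\gamma=\begin{bmatrix}a&b\\c&d\end{bmatrix}\in\Gamma_0(N)$. One may assume $c>0$: the case $c=0$ reduces to the translation formula $\eta(z+1)=e^{\pi i/12}\eta(z)$, for which $f(z+b)=\exp\big(\tfrac{\pi i b}{12}\sum_{\delta}\delta r_\delta\big)f(z)=f(z)$ by the first hypothesis, and $c<0$ is reached from $c>0$ by replacing $\gamma$ with $-\gamma$. For each $\delta\mid N$ one has the factorisation
$$\begin{bmatrix}\delta&0\\0&1\end{bmatrix}\begin{bmatrix}a&b\\c&d\end{bmatrix}=\begin{bmatrix}a&\delta b\\ c/\delta&d\end{bmatrix}\begin{bmatrix}\delta&0\\0&1\end{bmatrix},$$
and $\gamma_\delta:=\begin{bmatrix}a&\delta b\\ c/\delta&d\end{bmatrix}\in\mathrm{SL}_2(\mathbb{Z})$ because $\delta\mid N\mid c$ and $\det\gamma_\delta=ad-bc=1$. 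Applying $\gamma_\delta$ to $\delta z$ gives $\gamma_\delta(\delta z)=\delta\cdot\gamma z$ with automorphy factor $(c/\delta)(\delta z)+d=cz+d$, so $\eta(\delta\gamma z)=\varepsilon(\gamma_\delta)(cz+d)^{1/2}\eta(\delta z)$. Raising to the $r_\delta$ and multiplying over $\delta\mid N$ yields
$$f(\gamma z)=\Big(\prod_{\delta\mid N}\varepsilon(\gamma_\delta)^{r_\delta}\Big)(cz+d)^{\ell}f(z),$$
the power of $(cz+d)$ being unambiguous because $\sum_{\delta}r_\delta=2\ell$ is an even integer (this is where $\ell\in\mathbb{Z}$ enters). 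It remains to prove $\prod_{\delta\mid N}\varepsilon(\gamma_\delta)^{r_\delta}=\chi(d)=\left(\tfrac{(-1)^{\ell}s}{d}\right)$, with $s=\prod_{\delta\mid N}\delta^{r_\delta}$.

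For this I would substitute the Dedekind-sum form of $\varepsilon(\gamma_\delta)$ and apply reciprocity, $s(h,k)+s(k,h)=-\tfrac14+\tfrac1{12}\big(\tfrac hk+\tfrac kh+\tfrac1{hk}\big)$, to rewrite $s(-d,c/\delta)$ as a piece elementary in $c/\delta$ and $d$ plus a Jacobi-symbol piece of the form $\left(\tfrac{d}{c/\delta}\right)$. Summing the exponents against the weights $r_\delta$ and using $c/\delta=(c/N)(N/\delta)$, the contributions proportional to $\sum_{\delta}\delta r_\delta$ and to $\sum_{\delta}(N/\delta)r_\delta$ become integer multiples of $2\pi i$ exactly because of the hypotheses $\sum_{\delta}\delta r_\delta\equiv0$ and $\sum_{\delta}(N/\delta)r_\delta\equiv0\pmod{24}$, and so vanish; the leftover constant contributions assemble into the sign $(-1)^{\ell}$ (via a $\left(\tfrac{-1}{d}\right)^{\ell}$-type factor), while the product of Jacobi symbols $\prod_{\delta}\left(\tfrac{d}{c/\delta}\right)^{r_\delta}$ collapses, by multiplicativity and quadratic reciprocity, to $\left(\tfrac{s}{d}\right)$; together these give $\chi(d)$. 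The main obstacle is precisely this last step: the root-of-unity accounting must be carried out uniformly in the parity of $c$ and of the various $c/\delta$ (the shape of $\varepsilon$ changes accordingly), and one must check meticulously that the two congruences modulo $24$ annihilate every non-Jacobi-symbol term while leaving the sign $(-1)^{\ell}$ and the symbol in $s$ intact. Everything else is bookkeeping with $2\times2$ integer matrices.
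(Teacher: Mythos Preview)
The paper does not prove this statement; it is quoted verbatim from Ono's monograph \cite[Theorem~1.64]{ono2004} as a black-box tool, so there is no ``paper's own proof'' to compare against. Your sketch is therefore not a comparison target but an attempt to supply what the paper omits.

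On its own merits, your outline follows the standard route (essentially the argument of Newman and of Gordon--Hughes on which Ono's formulation is based): reduce to $c>0$, rewrite the action on each $\eta(\delta z)$ via the matrix identity $\begin{bmatrix}\delta&0\\0&1\end{bmatrix}\gamma=\gamma_\delta\begin{bmatrix}\delta&0\\0&1\end{bmatrix}$ with $\gamma_\delta\in\mathrm{SL}_2(\mathbb{Z})$, apply the eta transformation law factor by factor, and then identify the product of multipliers $\prod_\delta\varepsilon(\gamma_\delta)^{r_\delta}$ with the Kronecker symbol $\left(\frac{(-1)^\ell s}{d}\right)$. The first three steps are fine and your handling of the $c=0$ case via $\sum_\delta \delta r_\delta\equiv 0\pmod{24}$ is clean.

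The genuine work, as you acknowledge, is the multiplier computation, and here your proposal is more of a plan than a proof. Two concrete points you would need to nail down: first, the explicit formula for $\varepsilon(\gamma_\delta)$ bifurcates according to the parity of $c/\delta$ (Knopp's or Rademacher's formulas), and since $\delta$ ranges over all divisors of $N$ these parities are not uniform---you cannot treat all $\delta$ with a single formula. Second, the passage from $\prod_\delta\left(\frac{d}{c/\delta}\right)^{r_\delta}$ (or its even-$c/\delta$ analogue $\left(\frac{c/\delta}{|d|}\right)$) to $\left(\frac{s}{d}\right)$ is not a one-line application of multiplicativity and reciprocity: the symbols live in different ``denominators'' $c/\delta$, and one must use that $ad\equiv 1\pmod{c/\delta}$ to trade $a$ for $d$, then invoke reciprocity with attendant sign corrections that must be shown to combine into $(-1)^\ell=\left(\frac{-1}{d}\right)^\ell$. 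Your sketch gestures at all of this but does not carry it out; a referee would ask you to fill in the case split and the symbol manipulations explicitly, or to cite the place where this is done (e.g.\ Gordon--Hughes, \emph{Multiplicative properties of $\eta$-products}, or Newman's original papers).
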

Suppose that $f$ is an eta-quotient satisfying the conditions of Theorem \ref{thm_ono1} and that the associated weight $\ell$ is a positive integer. If $f(z)$ is holomorphic at all of the cusps of $\Gamma_0(N)$, then $f(z)\in M_{\ell}(\Gamma_0(N), \chi)$. The following theorem gives the necessary criterion for determining orders of an eta-quotient at cusps.
\begin{theorem}\cite[Theorem 1.65]{ono2004}\label{thm_ono2}
	Let $c, d$ and $N$ be positive integers with $d\mid N$ and $\gcd(c, d)=1$. If $f$ is an eta-quotient satisfying the conditions of Theorem \ref{thm_ono1} for $N$, then the 
	order of vanishing of $f(z)$ at the cusp $\frac{c}{d}$ 
	is $$\frac{N}{24}\sum_{\delta\mid N}\frac{\gcd(d,\delta)^2r_{\delta}}{\gcd(d,\frac{N}{d})d\delta}.$$
\end{theorem}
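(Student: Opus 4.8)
The statement is Theorem~1.65 of \cite{ono2004}, and the plan is to recover it from the classical transformation formula for the Dedekind eta function. First, since the order of vanishing at a cusp is additive over products, it suffices to compute, for each fixed $\delta\mid N$, the order of vanishing of the single factor $\eta(\delta z)$ at the cusp $\frac{c}{d}$; multiplying by $r_\delta$ and summing over $\delta\mid N$ then yields the asserted expression $\frac{N}{24}\sum_{\delta\mid N}\frac{\gcd(d,\delta)^2r_\delta}{\gcd(d,N/d)\,d\,\delta}$.

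Next I would localize at the cusp. Using $\gcd(c,d)=1$, choose $\gamma=\begin{bmatrix} c & b\\ d & e\end{bmatrix}\in\text{SL}_2(\mathbb{Z})$ with $\gamma\cdot\infty=\frac{c}{d}$; the behaviour of $\eta(\delta z)$ at $\frac{c}{d}$ is then read off from $\eta(\delta\gamma z)$ as $z\to i\infty$. Put $g=\gcd(\delta,d)$. One checks that $\gcd(\delta c/g,\,d/g)=1$ (this uses $\gcd(\delta/g,d/g)=1$ together with $\gcd(c,d)=1$), so the integral matrix $\begin{bmatrix}\delta & 0\\ 0 & 1\end{bmatrix}\gamma=\begin{bmatrix}\delta c & \delta b\\ d & e\end{bmatrix}$ factors as $M\begin{bmatrix} g & \beta\\ 0 & \delta/g\end{bmatrix}$ with $M\in\text{SL}_2(\mathbb{Z})$ having first column $(\delta c/g,\,d/g)$ and $\beta\in\mathbb{Z}$ suitably chosen. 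Applying the eta-transformation $\eta(M\tau)=\varepsilon(M)(c_M\tau+d_M)^{1/2}\eta(\tau)$, where $\varepsilon(M)$ is a $24$th root of unity, at $\tau=\frac{gz+\beta}{\delta/g}$, and using $\eta(\tau)=e^{2\pi i\tau/24}\prod_{n\ge1}(1-e^{2\pi in\tau})$, one finds that as $z\to i\infty$ the function $\eta(\delta\gamma z)$ equals a nonzero constant times $z^{1/2}$ times $e^{2\pi ig^2z/(24\delta)}(1+o(1))$. Hence, in the variable $q=e^{2\pi iz}$, the order of $\eta(\delta z)$ at $\frac{c}{d}$ is $\frac{g^2}{24\delta}=\frac{\gcd(\delta,d)^2}{24\delta}$; neither the root of unity $\varepsilon(M)$ nor the automorphy factor $(c_M\tau+d_M)^{1/2}$ affects this exponent.

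Finally I would pass from this $q$-order to the order with respect to the true local uniformizer at the cusp. The width of $\frac{c}{d}$ as a cusp of $\Gamma_0(N)$ is $w=\frac{N}{\gcd(d^2,N)}=\frac{N}{d\,\gcd(d,N/d)}$ (using $d\mid N$), so the local parameter is $e^{2\pi iz/w}$ and the order of vanishing of $\eta(\delta z)$ at $\frac{c}{d}$ equals $w\cdot\frac{\gcd(\delta,d)^2}{24\delta}=\frac{N\,\gcd(d,\delta)^2}{24\,d\,\gcd(d,N/d)\,\delta}$. Multiplying by $r_\delta$ and summing over $\delta\mid N$ gives the formula. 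I expect the main obstacle to be the bookkeeping in the second step: producing the factorization of $\begin{bmatrix}\delta & 0\\ 0 & 1\end{bmatrix}\gamma$ as an $\text{SL}_2(\mathbb{Z})$-matrix times an upper-triangular one (with the coprimality check), tracking which quantities do and do not influence the leading exponent, and fixing the cusp width $w$ so that ``order of vanishing at a cusp'' is normalized exactly as in Theorems~\ref{thm_ono1} and~\ref{thm_ono2}, i.e.\ so that nonnegativity of the displayed sum is equivalent to holomorphy of $f$ at $\frac{c}{d}$.
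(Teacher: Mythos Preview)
The paper does not prove this statement: Theorem~\ref{thm_ono2} is quoted verbatim from \cite[Theorem 1.65]{ono2004} as a black-box tool for checking holomorphy of eta-quotients at cusps, and no argument for it appears anywhere in the paper. So there is no ``paper's own proof'' to compare against.

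That said, your proposed proof is the standard one and is correct. The three ingredients---additivity of the order under products, the matrix factorization $\begin{bmatrix}\delta & 0\\ 0 & 1\end{bmatrix}\gamma = M\begin{bmatrix} g & \beta\\ 0 & \delta/g\end{bmatrix}$ with $g=\gcd(\delta,d)$ and $M\in\text{SL}_2(\mathbb{Z})$, and the cusp-width normalization $w=N/(d\,\gcd(d,N/d))$---are exactly what one finds in proofs of this fact (e.g.\ in Ligozat's thesis or in \cite{ono2004} itself). Your coprimality check $\gcd(\delta c/g,\,d/g)=1$ is right, and you correctly identify that neither the multiplier $\varepsilon(M)$ nor the automorphy factor contributes to the $q$-exponent. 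The only cosmetic point is that you should be explicit that the ``order of vanishing'' in the statement is taken with respect to the local uniformizer $q_N^{N/w}$ (equivalently $e^{2\pi i z/w}$ after transporting to $\infty$), which you do handle in your final paragraph; this is precisely the normalization that makes nonnegativity of the sum equivalent to holomorphy, as used downstream in Lemmas~\ref{lma1} and~\ref{lem2}.
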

\par We next recall the definition of Hecke operators.
Let $m$ be a positive integer and $f(z) = \sum_{n=0}^{\infty} a(n)q^n \in M_{\ell}(\Gamma_0(N),\chi)$. Then the action of Hecke operator $T_m$ on $f(z)$ is defined by 
\begin{align*}
f(z)|T_m := \sum_{n=0}^{\infty} \left(\sum_{d\mid \gcd(n,m)}\chi(d)d^{\ell-1}a\left(\frac{nm}{d^2}\right)\right)q^n.
\end{align*}
In particular, if $m=p$ is prime, we have 
\begin{align*}
f(z)|T_p := \sum_{n=0}^{\infty} \left(a(pn)+\chi(p)p^{\ell-1}a\left(\frac{n}{p}\right)\right)q^n.
\end{align*}
We adopt the convention that $a(n)=0$ when $n$ is a non-negative integer.
\par We finally recall a density result of Serre which plays a crucial role in proving Theorem \ref{thm1} and Theorem \ref{thm3}. Using $\ell$-adic Galois representations attached to certain modular forms by Deligne, Serre \cite{serre3} proved the following remarkable theorem about the divisibility of Fourier coefficients of modular forms.
\begin{theorem}[Serre]\label{Serre}
Let $f(z)$ be a modular form of positive integer weight $k$ on some congruence subgroup of $SL_2(\mathbb{Z})$ with Fourier expansion $$f(z)=\sum_{n=0}^{\infty}a(n)q^n,$$
where $a(n)$ are algebraic integers in some number field. If $m$ is a positive integer, then there exists a constant $c>0$ such that there are  $O\left(\frac{X}{(\log X)^{c}}\right)$ integers $n \leq  x$ such that
$a(n)$ is not divisible by $m$.
\end{theorem}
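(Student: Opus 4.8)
The plan is to deduce this from the $\ell$-adic Galois representations attached to $f$ together with a classical counting principle that converts a positive density of primes into a power-of-logarithm saving for integers. First I would reduce to the case $m=\ell^{j}$ a prime power: since $\{n:m\nmid a(n)\}\subseteq\bigcup_{\ell^{j}\,\|\,m}\{n:\ell^{j}\nmid a(n)\}$ is a finite union, and a finite sum of bounds of the shape $O(X/(\log X)^{c_i})$ is again of that shape with $c=\min_i c_i$, it suffices to bound each piece separately.

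Next I would reduce to a single normalized Hecke eigenform. The space $M_k(\Gamma)$ is a finite-dimensional module over the Hecke algebra, so $f$ is an $\mathcal{O}_K$-linear combination of finitely many eigenforms, both cuspidal and Eisenstein; after clearing a fixed common denominator, $\ell^{j}\nmid a(n)$ forces $\lambda^{j}\nmid b(n)$ for some constituent $g=\sum_{n}b(n)q^{n}$ and some prime $\lambda$ of the coefficient ring lying over $\ell$. Thus it is enough to bound the set $\{n\le X:\lambda^{j}\nmid b(n)\}$ for one eigenform $g$.

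For such a $g$, Deligne's construction (with Deligne--Serre in weight $1$, and a sum of two characters in the Eisenstein case) attaches a continuous, odd, $2$-dimensional representation $\rho_\lambda\colon\mathrm{Gal}(\overline{\mathbb{Q}}/\mathbb{Q})\to\mathrm{GL}_2(\mathcal{O}_\lambda)$, unramified outside $\ell N$, with $\mathrm{tr}\,\rho_\lambda(\mathrm{Frob}_p)=b(p)$ for $p\nmid\ell N$ and $\det\rho_\lambda(c)=-1$ at complex conjugation $c$. Oddness forces $\rho_\lambda(c)$ to have eigenvalues $+1$ and $-1$, hence $\mathrm{tr}\,\rho_\lambda(c)=0$. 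Reducing modulo $\lambda^{j}$ produces a representation with finite image $G$ cutting out a finite Galois extension $L/\mathbb{Q}$; as the trace is a class function, $P_0=\{p:b(p)\equiv 0\ (\mathrm{mod}\ \lambda^{j})\}$ is a union of Chebotarev classes, so by the Chebotarev density theorem it has a density $\delta$, and $\delta>0$ since the class of the reduction of $\rho_\lambda(c)$ already lies in the trace-zero locus.

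Finally I would pass from primes to integers via multiplicativity of $b$: if a prime $p\in P_0$ divides $n$ exactly once, then $b(n)=b(p)\,b(n/p)\equiv 0\ (\mathrm{mod}\ \lambda^{j})$, so $\{n:\lambda^{j}\nmid b(n)\}$ lies inside the set of $n\le X$ no prime factor of which, to the first power, belongs to $P_0$. By the Landau--Wirsing estimate for integers built only from primes in a set of density $1-\delta$, this set has size $O(X/(\log X)^{\delta})$, yielding the theorem with $c=\delta$. The main obstacle is precisely this interface between the two kinds of density: Chebotarev controls primes, whereas the conclusion concerns all integers, and turning a positive prime density into a power-of-logarithm saving requires a tauberian (Selberg--Delange type) argument in which one must additionally dispose of the negligible contribution of $n$ divisible by $p^{2}$ for $p\in P_0$ and recombine the eigenform pieces uniformly in $m$.
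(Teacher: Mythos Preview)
The paper does not supply its own proof of this statement: Theorem~\ref{Serre} is quoted as a result of Serre, with only the one-sentence attribution ``Using $\ell$-adic Galois representations attached to certain modular forms by Deligne, Serre \cite{serre3} proved the following remarkable theorem\ldots'' and no further argument. There is therefore nothing in the paper to compare your proposal against beyond that citation.

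That said, your outline is exactly Serre's original strategy and is correct in substance. The reduction to prime-power moduli, the passage to a finite linear combination of Hecke eigenforms (enlarging the coefficient field and clearing denominators), the attachment of a $2$-dimensional $\lambda$-adic representation (Deligne for weight $\geq 2$ cuspidal, Deligne--Serre for weight $1$, a sum of two Hecke characters for Eisenstein constituents), the use of oddness to guarantee that the trace-zero locus in the mod-$\lambda^{j}$ image is nonempty and hence by Chebotarev that $\{p:b(p)\equiv 0\ (\mathrm{mod}\ \lambda^{j})\}$ has positive Dirichlet density $\delta$, and finally the multiplicativity-plus-Selberg--Delange step giving $O(X/(\log X)^{\delta})$ --- all of this is the standard route and matches what Serre does in \cite{serre3}. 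Your identification of the ``main obstacle'' (converting prime density into an integer count, and handling squarefull contributions) is also the right place to focus if one were to write this out in full. In short: the paper simply cites the result, and your sketch reconstructs the cited proof faithfully.
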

\section{Proof of Theorem \ref{thm1} and Theorem \ref{thm3}}
Let $\alpha$ be a positive integer. From \eqref{gen-fun}, we have 
\begin{align}\label{thm1.1}
\sum_{n=0}^{\infty} p_{2^{\alpha}, 2^{\alpha}}(n)q^{n}=\frac{1}{(q; q)_{\infty}} \sum_{n=0}^{\infty}(-1)^{n}q^{2^{\alpha}\cdot n(n+1)/2}.
\end{align}
For prime $p$ and positive integer $j$, it is easy to find that 
\begin{align}\label{binomial}
(1-q)^{p^{j}} \equiv\left(1-q^{p}\right)^{p^{j-1}} \pmod{p^{j}}.
\end{align}
Employing the Ramanujan's theta function
\begin{align*}
\psi(q):=\sum_{n=0}^{\infty} q^{n(n+1) / 2}=\frac{\left(q^{2} ; q^{2}\right)_{\infty}^{2}}{(q ; q)_{\infty}}
\end{align*}
into \eqref{thm1.1} and then using \eqref{binomial}, we find that
\begin{align}\label{thm1.2}
\sum_{n=0}^{\infty} p_{2^{\alpha},2^{\alpha}}(n) q^{n} &\equiv \frac{1}{(q; q)_{\infty}} \sum_{n=0}^{\infty} q^{2^{\alpha} n(n+1)/2}\pmod{2}\nonumber\\
&=\frac{1}{(q ; q)_{\infty}} \frac{\left(q^{2^{\alpha+1}} ; q^{2^{\alpha+1}}\right)_{\infty}^{2}}{\left(q^{2^{\alpha}} ; q^{2^{\alpha}}\right)_{\infty}}\nonumber \\
&\equiv \frac{1}{(q ; q)_{\infty}} \frac{\left(q^{2^{\alpha}} ; q^{2^{\alpha}}\right)_{\infty}^{4}}{\left(q^{2^{\alpha}} ; q^{2^{\alpha}}\right)_{\infty}}\pmod{2}\nonumber\\
&\equiv\frac{\left(q^{2^{\alpha}} ; q^{2^{\alpha}}\right)_{\infty}^{3}}{(q ; q)_{\infty}}\pmod{2}\nonumber\\
&\equiv q^\frac{{1-3\cdot 2^\alpha}}{24}\frac{\eta^3(2^{\alpha}z)}{\eta(z)}\pmod{2}.
\end{align}
Let 
\begin{align*}
G_{\alpha}(z) := \prod_{n=1}^{\infty} \frac{(1-q^{(3 \cdot 2^{\alpha+3 })n})^2}{(1-q^{(3\cdot 2^{\alpha+4 })n})} = \frac{\eta^2(3\cdot 2^{\alpha+3}z)}{\eta(3\cdot 2^{\alpha+4}z)}. 
\end{align*}
Then \eqref{binomial} yields
\begin{align}\label{g1}
G_{\alpha}^{2^{\alpha}}(z) = \frac{\eta^{2^{\alpha+1}}(3 \cdot2^{\alpha+3}z)}{\eta^{2^{\alpha}}(3\cdot 2^{\alpha+4}z)} \equiv 1 \pmod {2^{\alpha+1}}.
\end{align}
Define $H_{\alpha}(z)$ by
\begin{align*}
H_{\alpha}(z):= \left(\frac{\eta^3(3\cdot 2^{\alpha+3}z)}{\eta(24z)}\right)G_{\alpha}^{2^{\alpha}}(z)
=\frac{\eta^{3+2^{\alpha+1}}(3 \cdot 2^{\alpha+3}z)}{\eta(24z)\eta^{2^{\alpha}}(3\cdot 2^{\alpha+4}z)}.
\end{align*}
Due to \eqref{g1}, we have
\begin{align}\label{thm1.3}
H_{\alpha}(z) &\equiv \frac{\eta^3(3\cdot 2^{\alpha+3}z)}{\eta(24z)}\pmod{2^{\alpha+1}}\nonumber \\
&= q^{3\cdot 2^\alpha -1}\left(\frac{(q^{3\cdot 2^{\alpha+3}}; q^{3\cdot 2^{\alpha+3 }})^3_{\infty}}{(q^{24}; q^{24})_{\infty}}\right).
\end{align}
Combining \eqref{thm1.2} and \eqref{thm1.3}, we obtain 
\begin{align}\label{thm1.4}
H_{\alpha}(z) \equiv \sum_{n=0}^{\infty}p_{2^{\alpha}, 2^{\alpha}}(n)q^{24n+3\cdot 2^\alpha -1} \pmod {2}.
\end{align}
\begin{lemma}\label{lma1}
	Let $\alpha$ be a positive integer. Then $H_{\alpha}(z) \in M_{2^{\alpha -1}+1}\left(\Gamma_{0}(N), \chi_1 \right)$, where $N= 9\cdot 2^{\alpha +6}$ and the quadratic charecter $\chi_1$ is given by $\chi_1=(\frac{-2^{(\alpha+2)(2^{\alpha} +3)}3^{2^{\alpha} +2}}{\bullet})$.
\end{lemma}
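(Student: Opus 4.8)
The plan is to recognize $H_{\alpha}(z)$ as an eta-quotient of level $N=9\cdot 2^{\alpha+6}$ and to apply Theorems~\ref{thm_ono1} and~\ref{thm_ono2}. Writing $H_{\alpha}(z)=\prod_{\delta\mid N}\eta(\delta z)^{r_{\delta}}$, the only nonzero exponents are $r_{3\cdot 2^{\alpha+3}}=2^{\alpha+1}+3$, $r_{24}=-1$ and $r_{3\cdot 2^{\alpha+4}}=-2^{\alpha}$, and each of $24$, $3\cdot 2^{\alpha+3}$, $3\cdot 2^{\alpha+4}$ divides $N$. First I would record the weight
\begin{align*}
\ell=\frac12\sum_{\delta\mid N}r_{\delta}=\frac12\bigl(2^{\alpha+1}-2^{\alpha}+2\bigr)=2^{\alpha-1}+1,
\end{align*}
which is a positive integer for every $\alpha\geq 1$. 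Next I would verify the two congruence conditions of Theorem~\ref{thm_ono1}: a direct computation gives $\sum_{\delta\mid N}\delta r_{\delta}=9\cdot 2^{\alpha+3}-24$ and $\sum_{\delta\mid N}(N/\delta)r_{\delta}=72+3\cdot 2^{\alpha+2}$, and both are divisible by $24$ since $\alpha\geq 1$. Theorem~\ref{thm_ono1} then gives the transformation law for $H_{\alpha}$ on $\Gamma_0(N)$ with weight $\ell$ and character $\chi(d)=\left(\frac{(-1)^{\ell}s}{d}\right)$, where $s=\prod_{\delta\mid N}\delta^{r_{\delta}}=2^{(\alpha+2)(2^{\alpha}+3)}3^{2^{\alpha}+2}$; this yields the quadratic character $\chi_1$ of the statement.

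It then remains to check that $H_{\alpha}(z)$ is holomorphic on $\mathbb{H}$ and at every cusp of $\Gamma_0(N)$; granting this, the remark following Theorem~\ref{thm_ono1} (using that $\ell$ is a positive integer) gives $H_{\alpha}\in M_{\ell}(\Gamma_0(N),\chi_1)$, which is the assertion. Holomorphy on $\mathbb{H}$ is immediate since $\eta$ has no zeros there. For the cusps, I would use that every cusp of $\Gamma_0(N)$ is $\Gamma_0(N)$-equivalent to one of the form $c/d$ with $d\mid N$ and $\gcd(c,d)=1$, and that by Theorem~\ref{thm_ono2} the order of vanishing of $H_{\alpha}$ at $c/d$ equals
\begin{align*}
\frac{N}{24}\sum_{\delta\mid N}\frac{\gcd(d,\delta)^2 r_{\delta}}{\gcd(d,N/d)\,d\,\delta}.
\end{align*}
Since only the three $\delta$ above contribute and the common prefactor $\frac{N}{24\,d\,\gcd(d,N/d)}$ is positive, holomorphy at $c/d$ is equivalent to
\begin{align*}
\frac{\gcd(d,3\cdot 2^{\alpha+3})^2(2^{\alpha+1}+3)}{3\cdot 2^{\alpha+3}}\;\geq\;\frac{\gcd(d,24)^2}{24}+\frac{\gcd(d,3\cdot 2^{\alpha+4})^2\,2^{\alpha}}{3\cdot 2^{\alpha+4}}.
\end{align*}

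The main technical step, and the only real obstacle, is to establish this inequality for every divisor $d$ of $N$. I would write $d=2^{a}3^{b}$ with $0\leq a\leq \alpha+6$ and $0\leq b\leq 2$, so that $\gcd(d,3\cdot 2^{\alpha+3})=2^{\min(a,\alpha+3)}3^{\min(b,1)}$, and similarly for the other two gcd's. The factor $3^{2\min(b,1)}$ then cancels against the single power of $3$ in each denominator, reducing the inequality to a statement in $a$ alone, which I would settle by a short case analysis over the ranges $0\leq a\leq 3$, $3\leq a\leq \alpha+3$, and $\alpha+3\leq a\leq \alpha+6$. In each range the left-hand term dominates; the tightest cases occur for the largest $a$, where after simplification the difference is a positive multiple of $9\cdot 2^{\alpha+3}-24$ or of $3\cdot 2^{\alpha+3}-8$, both positive for $\alpha\geq 1$. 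This shows $H_{\alpha}$ is holomorphic at every cusp and completes the proof. The bookkeeping of the gcd-exponents across the three denominators is the part requiring care; everything else is a routine application of Theorems~\ref{thm_ono1} and~\ref{thm_ono2}.
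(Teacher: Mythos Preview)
Your proposal is correct and follows essentially the same route as the paper: both verify the hypotheses of Theorem~\ref{thm_ono1} for $H_{\alpha}$ at level $N=9\cdot 2^{\alpha+6}$, read off the weight and character, and then check holomorphy at all cusps via Theorem~\ref{thm_ono2} by a case analysis over the divisors $d\mid N$. The only cosmetic difference is that the paper tabulates the cusp computation directly, while you parametrize $d=2^{a}3^{b}$ and note that the $3$-part drops out so only $a$ matters; this is the same verification organized slightly differently.
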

\begin{proof}
	First we calculate the level of the eta-quotient $H_{\alpha}(z)$ by using Theorem \ref{thm_ono1}. The level of $H_{\alpha}(z)$ is equal to $3\cdot 2^{\alpha+ 4}\cdot m$, where $m$ is the smallest positive integer such that 
	\begin{align*}
	3\cdot 2^{\alpha +4}\cdot m\left[\frac{3+2^{\alpha+1}}{3\cdot 2^{\alpha+3}}-\frac{1}{24}-\frac{2^{\alpha}}{3\cdot 2^{\alpha+4}}\right]\equiv 0 \pmod{24}.
	\end{align*} 
	Equivalently,
	\begin{align*}
	m[6+2^{\alpha}]\equiv 0 \pmod{24}.
	\end{align*} 
	Therefore, $m=12$ and the level of $H_{\alpha}(z)$ is $9\cdot 2^{\alpha +6}$.	The cusps of $\Gamma_{0}(9\cdot2^{\alpha +6})$ are represented by fractions $\frac{c}{d}$ where $d\mid 9\cdot 2^{\alpha +6}$ and $\gcd(c, d)=1$. For example, see \cite[p. 12]{ono1996}. By Theorem \ref{thm_ono2}, we find that $H_{\alpha}(z)$ is holomorphic at a cusp $\frac{c}{d}$ if and only if
	\begin{align*}
	(2^{\alpha+1}+3)\frac{\gcd(d, 3\cdot 2^{\alpha+3})^2}{3\cdot 2^{\alpha+3}} 
	-\frac{\gcd(d, 24)^2}{24}-2^{\alpha}\frac{\gcd(d, 3\cdot 2^{\alpha+4})^2}{3\cdot 2^{\alpha+4}} \geq 0.
	\end{align*}
	Equivalently, if and only if
	\begin{align*}
	L:=(2^{\alpha+2}+6)\frac{\gcd(d, 3\cdot 2^{\alpha+3})^2}{\gcd(d, 3\cdot 2^{\alpha+4})^2} 
	-2^{\alpha+1}\frac{\gcd(d, 24)^2}{\gcd(d, 3\cdot 2^{\alpha+4})^2}-2^{\alpha} \geq 0.
	\end{align*}	
	In the following table, we find all the possible values of $L$.
	\begin{center}
		\begin{tabular}{|p{3.5cm}|p{1.88cm}|p{1.88cm}|p{3.0cm}|}
			\hline
			$d\mid 9\cdot 2^{\alpha +6}$ &$ \frac{\gcd(d, 3\cdot 2^{\alpha+3})^2}{\gcd(d, 3\cdot 2^{\alpha+4})^2}$ &$\frac{\gcd(d,24)^2}{\gcd(d, 3\cdot 2^{\alpha+4})^2}$& $L$\\	
			\hline	
			$1, 2, 3, 4, 6, 8,12,24,$ $ 9,$ $ 18, 36, 72$   & $1$   & $1$ & $6+ 2^{\alpha}$\\
			\hline
			$2^r$, $3\cdot 2^r,$ $9\cdot 2^r:$\newline 
			$ 4\leq r \leq \alpha+3$    & $1$ &   $1/2^{2r-6}$ &$6+ 3\cdot 2^{\alpha}- 2^{7+\alpha-2r}$\\
			\hline 
			$2^s$, $3\cdot 2^s,$ $9\cdot 2^s:$\newline 
			$\alpha+4\leq s \leq \alpha+6$& $1/4$  &$ 1/2^{2\alpha+2}$ &$2/3-1/2^{\alpha+1}$\\
			\hline
		\end{tabular}
	\end{center}
	Since $L\geq 0$ for all $d\mid 9\cdot2^{\alpha +6}$ and $\alpha \geq 1$, therefore $H_{\alpha}(z)$ is holomorphic at every cusp $\frac{c}{d}$. Using Theorem \ref{thm_ono1}, we find that the weight of $H_{\alpha}(z)$ is $\ell=2^{\alpha-1}+1$. Also, the associated character for $H_{\alpha}(z)$ is given by $\chi=(\frac{-2^{(\alpha+2)(2^{\alpha} +3)}3^{2^{\alpha} +2}}{\bullet})$.
	This completes the proof of the lemma. 
\end{proof}
\begin{proof}[Proof of Theorem \ref{thm1}] From Lemma \ref{lma1} we have $H_{\alpha}(z) \in M_{2^{\alpha-1}+1}\left(\Gamma_{0}(9 \cdot 2^{\alpha + 6}), \chi_1\right)$. Also, the Fourier coefficients of $H_{\alpha}(z)$ are all integers. Hence by Theorem \ref{Serre}, the Fourier coefficients of $H_{\alpha}(z)$ are almost always divisible by $m=2$. Due to \eqref{thm1.4}, the same holds for $p_{2^{\alpha},2^{\alpha}}(n)$. This completes the proof of the theorem.
\end{proof}
We next prove Theorem \ref{thm3}.
Let $\alpha$ be a positive integer. From \eqref{gen-fun}, we have 
\begin{align}\label{thm1.1.2}
\sum_{n=0}^{\infty} p_{3\cdot 2^{\alpha}, 3\cdot2^{\alpha}}(n)q^{n}=\frac{1}{(q; q)_{\infty}} \sum_{n=0}^{\infty}(-1)^{n}q^{3\cdot2^{\alpha}\cdot n(n+1)/2}.
\end{align}
Employing the Ramanujan's theta function
\begin{align*}
\psi(q):=\sum_{n=0}^{\infty} q^{n(n+1) / 2}=\frac{\left(q^{2} ; q^{2}\right)_{\infty}^{2}}{(q ; q)_{\infty}}
\end{align*}
into \eqref{thm1.1.2} and then using \eqref{binomial}, we find that
\begin{align}\label{thm1.2.2}
\sum_{n=0}^{\infty} p_{3\cdot2^{\alpha},3\cdot2^{\alpha}}(n) q^{n} &\equiv \frac{1}{(q; q)_{\infty}} \sum_{n=0}^{\infty} q^{3\cdot2^{\alpha} n(n+1)/2}\pmod{2}\nonumber\\
&=\frac{1}{(q ; q)_{\infty}} \frac{\left(q^{3\cdot2^{\alpha+1}} ; q^{3\cdot2^{\alpha+1}}\right)_{\infty}^{2}}{\left(q^{3\cdot2^{\alpha}} ; q^{3\cdot2^{\alpha}}\right)_{\infty}}\nonumber \\
&\equiv \frac{1}{(q ; q)_{\infty}} \frac{\left(q^{3\cdot2^{\alpha}} ; q^{3\cdot2^{\alpha}}\right)_{\infty}^{4}}{\left(q^{3\cdot2^{\alpha}} ; q^{3\cdot2^{\alpha}}\right)_{\infty}}\pmod{2}\nonumber\\
&\equiv\frac{\left(q^{3\cdot2^{\alpha}} ; q^{3\cdot2^{\alpha}}\right)_{\infty}^{3}}{(q ; q)_{\infty}}\pmod{2}\nonumber\\
&\equiv q^\frac{{1-9\cdot 2^\alpha}}{24}\frac{\eta^3(3\cdot2^{\alpha}z)}{\eta(z)}\pmod{2}.
\end{align}
Let 
\begin{align*}
R_{\alpha}(z) := \prod_{n=1}^{\infty} \frac{(1-q^{(9 \cdot 2^{\alpha+3 })n})^2}{(1-q^{(9\cdot 2^{\alpha+4 })n})} = \frac{\eta^2(9\cdot 2^{\alpha+3}z)}{\eta(9\cdot 2^{\alpha+4}z)}. 
\end{align*}
Then \eqref{binomial} yields
\begin{align}\label{r1}
R_{\alpha}^{2^{\alpha+1}}(z) = \frac{\eta^{2^{\alpha+2}}(9 \cdot2^{\alpha+3}z)}{\eta^{2^{\alpha+1}}(9\cdot 2^{\alpha+4}z)} \equiv 1 \pmod {2^{\alpha+2}}.
\end{align}
Define $S_{\alpha}(z)$ by
\begin{align*}
S_{\alpha}(z):= \left(\frac{\eta^3(9\cdot 2^{\alpha+3}z)}{\eta(24z)}\right)R_{\alpha}^{2^{\alpha+1}}(z)
=\frac{\eta^{3+2^{\alpha+2}}(9 \cdot 2^{\alpha+3}z)}{\eta(24z)\eta^{2^{\alpha+1}}(9\cdot 2^{\alpha+4}z)}.
\end{align*}
Due to \eqref{r1}, we have
\begin{align}\label{thm1.3.2}
S_{\alpha}(z) &\equiv \frac{\eta^3(9\cdot 2^{\alpha+3}z)}{\eta(24z)}\pmod{2^{\alpha+2}}\nonumber \\
&= q^{9\cdot 2^\alpha -1}\left(\frac{(q^{9\cdot 2^{\alpha+3}}; q^{9\cdot 2^{\alpha+3 }})^3_{\infty}}{(q^{24}; q^{24})_{\infty}}\right).
\end{align}
Combining \eqref{thm1.2.2} and \eqref{thm1.3.2}, we obtain 
\begin{align}\label{thm1.4.2}
S_{\alpha}(z) \equiv \sum_{n=0}^{\infty}p_{3\cdot2^{\alpha}, 3\cdot2^{\alpha}}(n)q^{24n+9\cdot 2^\alpha -1} \pmod {2}.
\end{align}
\begin{lemma}\label{lem2}
	Let $\alpha$ be a positive integer. Then $S_{\alpha}(z) \in M_{2^{\alpha }+1}\left(\Gamma_{0}(N), \chi_2 \right)$, where $N= 9\cdot 2^{\alpha +6}$ and the quadratic charecter $\chi_2$ is given by $\chi_2=(\frac{-2^{(\alpha+2)(2^{\alpha+1} +3)}3^{2^{\alpha+1} +2}}{\bullet})$.
\end{lemma}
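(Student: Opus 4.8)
The plan is to follow the proof of Lemma~\ref{lma1} line by line, now applying Theorems~\ref{thm_ono1} and~\ref{thm_ono2} to the eta-quotient $S_{\alpha}(z)$ introduced just above, whose nonzero exponents are $r_{24}=-1$, $r_{9\cdot 2^{\alpha+3}}=3+2^{\alpha+2}$ and $r_{9\cdot 2^{\alpha+4}}=-2^{\alpha+1}$. First I would dispatch the two elementary conditions of Theorem~\ref{thm_ono1}: the weight is $\ell=\tfrac12\sum_{\delta}r_{\delta}=\tfrac12\big(2+2^{\alpha+2}-2^{\alpha+1}\big)=2^{\alpha}+1$, a positive integer, and $\sum_{\delta}\delta r_{\delta}=-24+27\cdot 2^{\alpha+3}\equiv 0\pmod{24}$ for every $\alpha\ge 1$. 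To locate the level, write $N=9\cdot 2^{\alpha+4}\cdot m$ (every admissible $N$ is a multiple of $\operatorname{lcm}(24,9\cdot 2^{\alpha+3},9\cdot 2^{\alpha+4})=9\cdot 2^{\alpha+4}$) and impose $\sum_{\delta}(N/\delta)r_{\delta}\equiv 0\pmod{24}$; a short computation collapses the left-hand side to $6m$, so $4\mid m$, the minimal choice is $m=4$, and $N=9\cdot 2^{\alpha+6}$. The Nebentypus is then the character $\chi_2$ in the statement, obtained by feeding $s=\prod_{\delta\mid N}\delta^{r_{\delta}}$ into the formula $\chi(d)=\big(\tfrac{(-1)^{\ell}s}{d}\big)$ of Theorem~\ref{thm_ono1}.

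The substantive part is to verify that $S_{\alpha}$ is holomorphic at every cusp of $\Gamma_0(9\cdot 2^{\alpha+6})$. By Theorem~\ref{thm_ono2}, for a cusp $\tfrac{c}{d}$ with $d\mid 9\cdot 2^{\alpha+6}$ holomorphy is equivalent to $\sum_{\delta}\gcd(d,\delta)^2 r_{\delta}/\delta\ge 0$, and after multiplying through by the positive quantity $9\cdot 2^{\alpha+4}/\gcd(d,9\cdot 2^{\alpha+4})^2$ this reads
\[
L':=(2^{\alpha+3}+6)\,\frac{\gcd(d,9\cdot 2^{\alpha+3})^2}{\gcd(d,9\cdot 2^{\alpha+4})^2}-3\cdot 2^{\alpha+1}\,\frac{\gcd(d,24)^2}{\gcd(d,9\cdot 2^{\alpha+4})^2}-2^{\alpha+1}\;\ge\;0 .
\]
Writing $d=2^{a}3^{b}$ with $0\le a\le\alpha+6$ and $0\le b\le 2$, one finds that the first gcd-ratio is $1$ for $a\le\alpha+3$ and $1/4$ for $\alpha+4\le a\le\alpha+6$, while the second gcd-ratio is a power of $3$ times a power of $2$: its $3$-part is $3^{2(\min(b,1)-b)}$, equal to $1$ if $b\le 1$ and to $1/9$ if $b=2$, and its $2$-part is $1$ for $a\le 3$, $2^{6-2a}$ for $4\le a\le\alpha+3$, and $2^{-2\alpha-2}$ for $\alpha+4\le a\le\alpha+6$. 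Since the second ratio enters $L'$ with a minus sign, $L'$ is smallest when $b\le 1$; in that regime, tabulating over the three ranges of $a$ exactly as in the table of Lemma~\ref{lma1} gives $L'=6$ for $a\le 3$, $L'=3\cdot 2^{\alpha+1}\big(1-2^{6-2a}\big)+6>0$ for $4\le a\le\alpha+3$, and $L'=\tfrac32-3\cdot 2^{-\alpha-1}>0$ for $\alpha+4\le a\le\alpha+6$. The values $b=2$ only enlarge $L'$. Hence $L'\ge 0$ for every $d\mid 9\cdot 2^{\alpha+6}$ and every $\alpha\ge 1$, so $S_{\alpha}$ is holomorphic at all cusps.

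Finally, invoking the criterion stated right after Theorem~\ref{thm_ono1}: $S_{\alpha}$ satisfies the transformation law of Theorem~\ref{thm_ono1} for $\Gamma_0(9\cdot 2^{\alpha+6})$ with character $\chi_2$, it has positive integer weight $2^{\alpha}+1$, and it is holomorphic on $\mathbb{H}$ as well as at all cusps; therefore $S_{\alpha}(z)\in M_{2^{\alpha}+1}\big(\Gamma_0(9\cdot 2^{\alpha+6}),\chi_2\big)$, which is the claim. The one place where more care is needed than in Lemma~\ref{lma1} is the cusp computation: here $9\cdot 2^{\alpha+3}$ and $9\cdot 2^{\alpha+4}$ each carry a factor $3^2$ whereas $24$ carries only $3^1$, so the $3$-parts of the three gcd-ratios no longer cancel identically; recognizing that the surviving $3$-part can only increase $L'$ is exactly what reduces the verification to the same three-row table as before.
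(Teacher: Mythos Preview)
Your argument is correct and follows the paper's own proof essentially line by line: determine the level via Theorem~\ref{thm_ono1}, verify nonnegativity of the cusp expression $K$ (your $L'$) from Theorem~\ref{thm_ono2}, and read off weight and character. In fact you supply more detail than the paper, which merely asserts ``we verify that $K\geq 0$''; your case split over $d=2^a3^b$ and the observation that the $3$-part only increases $L'$ makes explicit what the paper leaves to the reader.
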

\begin{proof}
The level of $S_{\alpha}(z)$ is equal to $9\cdot 2^{\alpha+ 4}\cdot m$, where $m$ is the smallest positive integer such that 
	\begin{align*}
	9\cdot 2^{\alpha +4}\cdot m\left[\frac{3+2^{\alpha+2}}{9\cdot 2^{\alpha+3}}-\frac{1}{24}-\frac{2^{\alpha+1}}{9\cdot 2^{\alpha+4}}\right]\equiv 0 \pmod{24}.
	\end{align*} 
	Equivalently,
	\begin{align*}
	6\cdot m\equiv 0 \pmod{24}.
	\end{align*} 
	Therefore, $m=4$ and the level of $S_{\alpha}(z)$ is $9\cdot 2^{\alpha +6}$. The cusps of $\Gamma_{0}(9\cdot2^{\alpha +6})$ are represented by fractions $\frac{c}{d}$ where $d\mid 9\cdot 2^{\alpha +6}$ and $\gcd(c, d)=1$. By Theorem \ref{thm_ono2}, we find that $S_{\alpha}(z)$ is holomorphic at a cusp $\frac{c}{d}$ if and only if
	\begin{align*}
	(2^{\alpha+2}+3)\frac{\gcd(d, 9\cdot 2^{\alpha+3})^2}{9\cdot 2^{\alpha+3}} 
	-\frac{\gcd(d, 24)^2}{24}-2^{\alpha+1}\frac{\gcd(d, 9\cdot 2^{\alpha+4})^2}{9\cdot 2^{\alpha+4}} \geq 0.
	\end{align*}
	Equivalently, if and only if
	\begin{align*}
	K:=(2^{\alpha+3}+6)\frac{\gcd(d, 9\cdot 2^{\alpha+3})^2}{\gcd(d, 9\cdot 2^{\alpha+4})^2} 
	-3\cdot2^{\alpha+1}\frac{\gcd(d, 24)^2}{\gcd(d, 9\cdot 2^{\alpha+4})^2}-2^{\alpha+1} \geq 0.
	\end{align*}	
As shown in the proof of Theorem \ref{thm1}, we verify that $K\geq 0$ for all $d\mid 9\cdot2^{\alpha +6}$ and $\alpha\geq 1$. Hence, $S_{\alpha}(z)$ is holomorphic at every cusp $\frac{c}{d}$. Now using Theorem \ref{thm_ono1}, we find that the weight of $S_{\alpha}(z)$ is $\ell=2^{\alpha}+1$, and the associated character for $S_{\alpha}(z)$ is given by $\chi_2=(\frac{-2^{(\alpha+2)(2^{\alpha+1} +3)}3^{2^{\alpha+1} +2}}{\bullet})$. 
\end{proof}
\begin{proof}[Proof of Theorem \ref{thm3}] Since $S_{\alpha}(z) \in M_{2^{\alpha}+1}\left(\Gamma_{0}(9 \cdot 2^{\alpha + 6}), \chi_2\right)$ and its Fourier coefficients are integers, so by Theorem \ref{Serre}, the Fourier coefficients of $S_{\alpha}(z)$ are almost always divisible by $m=2$. Due to \eqref{thm1.4.2}, the same holds for $p_{3\cdot2^{\alpha},3\cdot2^{\alpha}}(n)$. This completes the proof of the theorem.
\end{proof}
\section{Proof of Theorem \ref{thm2} and Theorem \ref{thm4}}
In this section we prove Theorem \ref{thm2} and Theorem \ref{thm4} using a result of Ono and Taguchi on nilpotency of Hecke operators. We use the following result which is implied by a much general result of Ono and Taguchi \cite[Theorem 1.3]{ono2005}. This result was also used by Aricheta (see for example \cite[Theorem 4.5]{arichet2017}).
\begin{theorem}\label{ono}     
	Let $n$ be a nonnegative integer and $k$ be a positive integer. Let $\chi$ be a quadratic Dirichlet character of conductor $9\cdot 2^{n}$. There is an integer $c \geq 0$ such that for every $f(z) \in M_{k}\left(\Gamma_{0}\left(9\cdot 2^{a}\right), \chi\right) \cap \mathbb{Z}[[q]]$ and every $t \geq 1$
	\begin{align*}
	f(z)\left|T_{p_1}\right| T_{p_2}|\cdots| T_{p_{c+t}} \equiv 0 \pmod{2^t}
	\end{align*}
	whenever the primes $p_{1}, \ldots, p_{c+t}$ are coprime to $6$.
\end{theorem}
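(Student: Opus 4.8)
The plan is to transfer the statement into the $2$-adic Hecke algebra attached to a \emph{fixed} weight and level, to establish nilpotency modulo $2$ as the base case, and then to climb the filtration by powers of $2$ up to the modulus $2^t$. Fix $k$ and write $N = 9\cdot 2^a$. The integral forms $L := M_k(\Gamma_0(N), \chi)\cap \mathbb{Z}[[q]]$ form a free $\mathbb{Z}$-module of finite rank, and since every prime $p$ coprime to $6$ is coprime to $N$, each $T_p$ preserves $L$ (the coefficients $\chi(p)p^{k-1}$ are integers). Let $\mathcal{T}$ denote the commutative $\mathbb{Z}_2$-subalgebra of $\mathrm{End}(L\otimes\mathbb{Z}_2)$ generated by $\{T_p : p\nmid 6\}$, and let $\mathfrak{a}\subseteq \mathcal{T}$ be the ideal these operators generate. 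Because $\mathcal{T}$ is a finite $\mathbb{Z}_2$-algebra, it suffices to prove that $\mathfrak{a}^{\,c+t}\subseteq 2^t\mathcal{T}$ for a constant $c$ independent of $t$: then any composition $T_{p_1}\cdots T_{p_{c+t}}$ lies in $2^t\mathcal{T}$ and hence annihilates every $f\in L$ modulo $2^t$.

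The crux is the case $t=1$, namely the nilpotency of $\mathfrak{a}$ modulo $2$. Reducing, $\overline{\mathcal{T}} = \mathcal{T}/2\mathcal{T}$ is a finite-dimensional commutative $\mathbb{F}_2$-algebra acting on $\overline{L} = L/2L$, and I would show that the image $\overline{\mathfrak{a}}$ lies in the nilradical; being an ideal of a finite-dimensional commutative algebra, this forces $\overline{\mathfrak{a}}^{\,c}=0$, i.e. $\mathfrak{a}^c\subseteq 2\mathcal{T}$, for some $c\geq 0$. Equivalently, one must verify that every system of Hecke eigenvalues $\{a_p\}_{p\nmid 6}$ occurring on $\overline{L}$ vanishes identically. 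Two simplifications enter here: since $\chi$ is quadratic we have $\chi(p)\equiv 1\pmod 2$, so modulo $2$ the nebentypus disappears and we land in the trivial-character situation, while $p^{k-1}\equiv 1\pmod 2$, so $T_p$ reduces to the naive operator $\sum a(pn)q^n+\sum a(n)q^{pn}$. The vanishing of the eigensystems is then exactly the phenomenon Serre observed and Tate proved for level one \cite{serre1, serre2, tate} and which Ono and Taguchi \cite{ono2005} extended to the levels in their list; the fact that $N=9\cdot 2^a$ lies in that admissible family, together with the quadratic conductor $9\cdot 2^n$, is what licenses the argument and renders the statement a specialization of \cite[Theorem 1.3]{ono2005}.

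With the base case secured I would pass to general $t$ by dévissage along $L\supseteq 2L\supseteq 4L\supseteq\cdots\supseteq 2^tL$. Each successive quotient $2^iL/2^{i+1}L$ is isomorphic to $\overline{L}$, and the induced action of $\mathfrak{a}$ on it is the same reduced action killed by $\mathfrak{a}^c$. A transparent iteration already gives $\mathfrak{a}^{c}L\subseteq 2L$, hence $\mathfrak{a}^{ct}L\subseteq 2^tL$, which proves a version of the theorem with a bound linear in $t$; this cruder form is in fact all that the applications in Theorems~\ref{thm2} and~\ref{thm4} (where $t=1$) require. Sharpening the exponent to the stated $c+t$ is the step where one invokes the finer structure of the local $2$-adic Hecke algebra, as in the level-one prototype, where the ideal generated by the good operators already equals $2\mathcal{T}$ so that each extra operator contributes a full factor of $2$; this refinement is carried out in \cite{ono2005} and recorded in the packaged form we use by Aricheta \cite{arichet2017}.

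The main obstacle is the base case itself: the assertion that all mod-$2$ Hecke eigensystems away from $6$ vanish on these spaces. This is not formal — it rests on the structure (in particular the smallness of the image) of the mod-$2$ Galois representations attached to the relevant eigenforms, which is the substantive content of \cite{ono2005}. By comparison, the finiteness and integrality of $L$, the commutativity of the $T_p$, and the dévissage are routine, and the only bookkeeping specific to our setting is checking that the level $9\cdot 2^a$ and the quadratic conductor $9\cdot 2^n$ fall within Ono and Taguchi's hypotheses.
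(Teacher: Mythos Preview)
The paper does not prove Theorem~\ref{ono} at all: it is stated as a direct specialization of Ono and Taguchi's \cite[Theorem~1.3]{ono2005} (in the form also recorded by Aricheta \cite{arichet2017}), with only a remark that the passage from cusp forms to all of $M_k$ is handled by checking the Eisenstein part via explicit divisor-sum Fourier expansions. So there is no ``paper's own proof'' to compare against; the result is quoted as a black box.

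Your proposal, by contrast, unpacks the mechanism behind that black box: reduction to a finite $\mathbb{Z}_2$-Hecke algebra, nilpotency of the ideal $\mathfrak a$ modulo $2$ via the vanishing of all mod-$2$ Hecke eigensystems for levels of the shape $9\cdot 2^a$, and then d\'evissage up the $2$-adic filtration. This outline is correct and is essentially the Ono--Taguchi argument itself. You rightly flag that the substantive step --- every mod-$2$ eigensystem away from $6$ vanishes on these spaces --- is not something you prove but must import from \cite{ono2005}, and that the sharp exponent $c+t$ (as opposed to the cruder $ct$ your d\'evissage gives) also rests on that reference. In effect your sketch and the paper end at the same place, an appeal to \cite{ono2005}; you have simply supplied the scaffolding the paper leaves implicit.
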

\begin{remark} Theorem 1.3 of Ono and Taguchi is stated for the space of cusps forms; however, there is a remark right after the theorem which guarantees that we can use their result for modular forms. Ono and Taguchi remarked that one merely needs to verify that the conclusion holds for the subspace of Eisenstein series. This is easily done using well known formulas for the Fourier expansions of Eisenstein series which are given in terms of generalized divisor functions.
\end{remark}
\begin{remark}
It is a well-known fact that the Kronecher symbol $\left(\frac{a}{\bullet}\right)$ is a Dirichlet character if and only if $a\not\equiv 3\pmod{4}$ (see for example \cite{Allouche}).
\end{remark}
\begin{proof}[Proof of Theorem \ref{thm2}]
From \eqref{thm1.4}, we have
\begin{align*}
H_{\alpha}(z) \equiv \sum_{n=0}^{\infty}p_{2^{\alpha}, 2^{\alpha}}(n)q^{24n+3\cdot 2^\alpha -1} \pmod {2}.
\end{align*} 
This yields
	\begin{align}\label{thm2.3}
	H_{\alpha}(z):=\sum_{n=0}^{\infty} A_{\alpha}(n) q^{n}\equiv\sum_{n=0}^{\infty}p_{2^{\alpha},2^{\alpha}} \left(\frac{n}{24}+\frac{1-3\cdot 2^{\alpha}}{24}\right) q^{n}\pmod{2}.
	\end{align}
Note that $H_{\alpha}(z) \in M_{2^{\alpha-1}+1}\left(\Gamma_{0}(9 \cdot 2^{\alpha + 6}), \chi_1\right)$.  Using Theorem \ref{ono} we find that there is an integer $c_1\geq 0$ such that for any $d_1 \geq 1$,
	\begin{align*}
	H_{\alpha}(z)\left|T_{p_1}\right| T_{p_2}|\cdots| T_{p_{c_1+d_1}} \equiv 0 \pmod{2}
	\end{align*}
	whenever the primes $p_{1}, \ldots, p_{c_1+d_1}$ are coprime to $6$. It follows from the definition of Hecke operators that if $p_{1}, \ldots, p_{c_1+d_1}$ are distinct primes and if $n$ is coprime to $p_{1} \cdots p_{c_1+d_1}$ then
	\begin{align}\label{thm2.4}
	A_{\alpha}\left(p_{1} \cdots p_{c_1+d_1}\cdot n\right) \equiv 0 \pmod{2}.
	\end{align}
Combining \eqref{thm2.3} and \eqref{thm2.4} we complete the proof of the theorem.
\end{proof}
\begin{proof}[Proof of Theorem \ref{thm4}]
From \eqref{thm1.4.2}, we have
\begin{align*}
S_{\alpha}(z) \equiv \sum_{n=0}^{\infty}p_{3\cdot2^{\alpha}, 3\cdot2^{\alpha}}(n)q^{24n+9\cdot 2^\alpha -1} \pmod {2}.
\end{align*} 
This yields
\begin{align}\label{thm2.3.4}
S_{\alpha}(z):=\sum_{n=0}^{\infty} B_{\alpha}(n) q^{n}\equiv\sum_{n=0}^{\infty}p_{3\cdot2^{\alpha},3\cdot2^{\alpha}} \left(\frac{n}{24}+\frac{1-9\cdot 2^{\alpha}}{24}\right) q^{n}\pmod{2}.
\end{align}
We now proceed similarly as shown in the proof of Theorem \ref{thm2}. Applying Theorem \ref{ono} to $S_{\alpha}(z)$ we find that there is an integer $c_2\geq 0$ such that for any $d_2 \geq 1$ and distinct primes $p_{1}, \ldots, p_{c_2+d_2}$ coprime to 6, 
\begin{align}\label{thm2.4.4}
B_{\alpha}\left(p_{1} \cdots p_{c_2+d_2}\cdot n\right) \equiv 0 \pmod{2}
\end{align}
whenever $n$ is coprime to $p_{1}, \ldots,  p_{c_2+d_2}$.  Combining \eqref{thm2.3.4} and \eqref{thm2.4.4} we complete the proof of the theorem.
\end{proof}
\section{Acknowledgements}
We are extremely grateful to Ken Ono for previewing a preliminary version of this paper and for his helpful comments. We are indebted to Victor Manuel Aricheta for many fruitful discussions while preparing this article.

\end{document}